\newtheorem{theorem}{Theorem}[section]
\newtheorem{lemma}[theorem]{Lemma}
\newtheorem{corollary}[theorem]{Corollary}
\newtheorem{example}[theorem]{Example}
\newtheorem{remark}[theorem]{Remark}
\newcommand\bC{\mathbb{C}}
\newcommand\res{\operatornamewithlimits{res}}
\newcommand\myRe{\operatorname{Re}}
\newcommand\myIm{\operatorname{Im}}
\newcommand\dom{\operatorname{dom}}
\newcommand\ls{\operatorname{ls}}
\newcommand\sumI{\sideset{}{^{(1)}}\sum}
\newcommand\eps{\varepsilon}
\begin{document}

\title[Rank-one perturbations]
{Direct and inverse spectral problems for rank-one perturbations of self-adjoint operators}

\author[O.~Dobosevych]{Oles Dobosevych} 
\address{Ukrainian Catholic University, 2a Kozelnytska str, 79026 Lviv, Ukraine}
\email{dobosevych@ucu.edu.ua}

\author[R.~Hryniv]{Rostyslav Hryniv}
\address{Ukrainian Catholic University, 2a Kozelnytska str, 79026 Lviv, Ukraine \and University of Rzesz\'{o}w\\ 1\, Pigonia str.\\ 35-959 Rzesz\'{o}w, Poland}
\email{rhryniv@ucu.edu.ua, rhryniv@ur.edu.pl}

\subjclass[2010]{Primary: 47A55,  Secondary: 47A10, 15A18, 15A60}%
\keywords{Operators, rank-one perturbations, non-simple eigenvalues, eigenvalue asymptotics}%

\date{12 June 2020}%

\begin{abstract}
	For a given self-adjoint operator $A$ with discrete spectrum, we completely characterize possible eigenvalues of its rank-one perturbations~$B$ and discuss the inverse problem of reconstructing $B$ from its spectrum.
\end{abstract}

\maketitle

\section{Introduction}\label{sec:intro}

The main aim of this paper is to give a complete answer to the question, what spectra rank-one perturbations $B=A + \langle \cdot, \varphi \rangle \psi$ of a given self-adjoint operator~$A$ with simple discrete spectrum may have. There are several reasons why this question is of interest. Firstly, such perturbations lead to the explicit formulae of perturbation theory and thus many related questions can be fully answered. Secondly, despite its simplicity, the model offers extremely rich family of perturbed spectra. Namely, the main results of this paper show that, apart from the prescribed asymptotic distribution of eigenvalues, the spectrum of a rank-one perturbation~$B$ of~$A$ might become arbitrary---in particular, it may get eigenvalues of arbitrarily prescribed multiplicities in an arbitrarily prescribed finite set of complex points. In addition, we suggest an explicit method of constructing rank-one perturbations of~$A$ with a given admissible spectrum.  

Similar questions in finite-dimensional case have been studied since 1990-ies. In particular, Krupnik proved in~\cite{Kru92} that, given an arbitrary $n\times n$ matrix $A$, the spectrum of its rank-one perturbation can become an arbitrary complex $n$-tuple;  that was then further specified for the classes of Hermitian, unitary, and normal matrices. Savchenko~\cite{Sav03} studied the changes in the Jordan structure of~$A$ under a rank-one perturbation and found out that, generically, in each root subspace, only the longest Jordan chain splits. For low-rank perturbations, that result was further generalized in~\cite{Sav04} and independently in \cite{MorDop03}.  In~\cite{Far16}, the number of distinct eigenvalues of a matrix~$B$ was estimated in terms of some spectral characteristics of~$A$ and the rank of the perturbation. One should mention that earlier, H\"ormander and Melin~\cite{HorMel94} explained similar effects of rank-one perturbations in an infinite-dimensional setting; recently, Behrndt a.o.~\cite{BehLebPerMoeTru15} discussed possible changes to Jordan structure of an arbitrary linear operator~$A$ in a Banach space under general finite-rank perturbations.

For structured matrices and matrix pencils, a detailed rank-one perturbation theory and its application in the control theory was recently developed in a series of papers by Mehl a.o.~\cite{MehMehRanRod11, MehMehRanRod12, MehMehRanRod13, MehMehRanRod14, MehMehRanRod16, MehMehWoj17, SosMorMeh20}. The results established therein include e.g. changes in the Jordan structure of~$A$ under perturbation within classes of matrices enjoying certain real or complex Hamiltonian symmetry~\cite{MehMehRanRod11,MehMehRanRod13, MehMehRanRod16}, or for $H$-Hermitian matrices, with (skew-)Hermitian $H$, using the canonical form of the pair $(B, H)$~\cite{MehMehRanRod12, MehMehRanRod14}. Rank-one perturbations of matrix pencils and an important eigenvalue placement problem were studied e.g.\ in~\cite{MehMehWoj17, GerTru17, BarRoc20}, while a more general perturbation theory for structured matrices was outlined in the recent paper~\cite{SosMorMeh20}.

The cited results are mostly essentially finite-dimensional in the sense that their methods do not allow straightforward generalization to the infinite-dimensional case (see, however, \cite{HorMel94, BehMoeTru14}). The latter has been studied within the general spectral theory for bounded or unbounded operators in infinite-dimensional Banach or Hilbert spaces~\cite{Kat95}. For instance, a comprehensive spectral analysis of rank-one perturbations of unbounded self-adjoint operators is carried out in~\cite{Sim95}, where a detailed characterization of discrete, absolutely continuous, and singlularly continuous spectra of the perturbation~$B$ is given. A thorough overview of the theory of Schr\"odinger operators under singular point perturbations (formally corresponding to additive Dirac delta-functions and their derivatives) is given in the monographs by Albeverio a.o.~\cite{AGHH, AlbKur00}, suggesting also comprehensive reference lists. Much attention has been paid to the so-called singular and super-singular rank-one or finite-rank perturbations of self-adjoint operators, where the functions $\varphi$ and $\psi$ belong to the scales of Hilbert spaces $\operatorname{dom}(A^\alpha)$ with negative~$\alpha$, see e.g.~\cite{AlbKonKos05, AlbKosKurNiz03, AlbKos99, AlbKuzNiz08,  DudVdo16, Gol18, Kur04, KurLugNeu19, KuzNiz06, AlbKur97, AlbKur97a, AlbKur99}; in this case, a typical approach is through the Krein extension theory of self-adjoint operators. Rank-one and finite-rank  perturbations of self-adjoint operators in Krein spaces have been recently discussed in e.g.~\cite{BehMoeTru14, BehLebPerMoeTru16}.

Despite the extensive research in the area, there seems to be no complete understanding what spectra rank-one perturbations of a given opearator $A$ can produce. As the earlier research demonstrates (cf.~\cite{Sim95,AGHH,AlbKur00}), the question is quite non-trivial even for self-adjoint perturbations of a self-adjoint operator~$A$, and thus necessarily much more complicated for generic rank-one perturbations. In our previous work~\cite{DobHry20}, we described local spectral properties of rank-one perturbations of a self-adjoint operator with discrete spectrum. Namely, it was shown therein that, as in the finite-dimensional case~\cite{Kru92}, such a perturbation can possess eigenvalues of arbitrarily prescribed multiplicities at any finite set of complex numbers; cf.~\cite{HomHry20} for similar results for the class of $\mathcal{PT}$-symmetric perturbations. 

The main aim of the present paper is to give a complete description of the possible spectra of rank-one perturbations of a given self-adjoint operator~$A$ with simple discrete spectrum. More exactly, with $\lambda_n$ denoting the eigenvalues of $A$, Theorem~\ref{thm:EV-asympt} states that the eigenvalues of a rank-one perturbation~$B$ can be labelled as $\mu_n$ (counting with multiplicities) so that the sum of all offsets $|\mu_n-\lambda_n|$ is finite. Moreover, it turns out that every sequence $\mu_n$ with this property can be a spectrum for such a $B$; Theorem~\ref{thm:inv} in addition suggests a method for constructing all such rank-one perturbations~$B$.

The structure of the paper is as follows. In the next section, we collect some basic spectral properties of the rank-one perturbations~$B$. In Section~\ref{sec:asymptotics}, the asymptotic distribution of eigenvalues of $B$ is studied and, in particular, summability of the offsets $|\mu_n-\lambda_n|$ is proved. Sufficiency of this condition, as well as an algorithm for constructing a rank-one perturbation~$B$ with a prescribed admissible spectrum are established in Section~\ref{sec:inverse}. Finally, in Section~\ref{sec:example}, we give two examples and discuss straightforward generalizations of the main results to wider classes of the operators~$A$.

%\textit{Notations.} Throughout the paper, we shall denote by $\rho(T)$, $\sigma(T)$, and $\dom(T)$ the resolvent set, the spectrum, and the domain of an operator~$T$.

%%%%%%%%%%%%%%%%%%%%%%%%%%%%%%%%%%%%%%%%%%%%

\section{Preliminaries}\label{sec:general}

%%%%%%%%%%%%%%%%%%%%%%%%%%%%%%%%%%%%%%%%%%%%

In this section, we collect some properties of the rank-one perturbations of self-adjoint operators~$A$ acting in a fixed fixed separable (infinite-dimensional) Hilbert space~$H$ that will be required to prove the main results of this work. Throughout the whole paper, we shall assume that 
\begin{itemize}
  \item[(A1)] the operator~$A$ is self-adjoint and has simple discrete spectrum.
\end{itemize}
The operator~$A$ is necessarily unbounded but it may be bounded below or above; without loss of generality, in this case we assume that $A$ is bounded below (otherwise, we just replace $A$ with $-A$). Under these assumptions, the spectrum of~$A$ consists of simple real eigenvalues that can be listed in increasing order as $\lambda_n$, $n\in I$, with~$I = \mathbb{N}$ if $A$ is bounded below and $I=\mathbb{Z}$ otherwise. Keeping in mind the most important and interesting applications to the differential operators, we make an additional assumption that
\begin{itemize}
  \item[(A2)] the eigenvalues of~$A$ are separated, i.e.,
  \begin{equation}\label{eq:dist}
  \inf_{n \in I} |\lambda_{n+1} - \lambda_n| =: d > 0.
  \end{equation}
\end{itemize}

The operator~$B$ is a rank-one perturbation of the operator~$A$, i.e.,
\begin{equation}\label{eq:B}
    B=A + \langle \cdot, \varphi \rangle \psi
\end{equation}
with fixed non-zero vectors~$\varphi$ and $\psi$ in~$H$ and with~$\langle\,\cdot\,,\,\cdot\,\rangle$ denoting a scalar product in~$H$. Clearly, $B$
is well defined and closed on its natural domain $\dom(B)$ equal to $\dom(A)$.
Next, for $\lambda \in \rho(A)$, we introduce the \emph{characteristic function}
\begin{equation}\label{eq:F}
    F(\lambda) := \langle (A-\lambda)^{-1}\psi, \varphi \rangle + 1
\end{equation}
and denote by $\mathcal{N}_F$ the set of zeros of $F$. This function appears in the Krein resolvent formula for~$B$~\cite{AlbKur00, DobHry20}, and its zeros characterise the spectrum of~$B$.

To be more specific, we denote by $v_n$ a normalized eigenvector of~$A$ corresponding to the eigenvalue~$\lambda_n$; then the set $\{v_n\}_{n\in I}$ forms an orthonormal basis of~$H$, and we let $a_n$ and $b_n$ be the corresponding Fourier coefficients of the vectors~$\varphi$ and $\psi$, so that 
\[
	\varphi = \sum_{k\in I} a_k v_k, \qquad \psi = \sum_{k\in I} b_k v_k.
\]
Now we set 
\[
	I_0 := \{n \in I \mid a_nb_n =0\}, \qquad 	I_1 := \{n \in I \mid a_nb_n \ne 0\}
\]
and $\sigma_j(A) := \{\lambda_n \mid n\in I_j\}$; then $\sigma_0(A) = \sigma_0(B) := \sigma(A) \cap \sigma(B)$ is the common part of the spectra of~$A$ and $B$, while the spectrum of~$B$ in~$\bC \setminus \sigma_0(A)$ coincides with the set of zeros of~$F$. 

In fact, the function~$F$ also characterizes eigenvalue multiplicities of the operator~$B$. We recall that the \emph{geometric} multiplicity of an eigen\-value~$\lambda$ of~$B$ is the dimension of the null-space of the operator~$B-\lambda$, while its \emph{algebraic} multiplicity is the dimension of the corresponding root subspace, i.e., of the set of all $y \in \dom (B)$ such that $(B-\lambda)^k y =0$ for some $k\in\mathbb{N}$. Next, by the spectral theorem for~$A$, the characteristic function~$F$ of~\eqref{eq:F} can be written as%
\begin{footnote}
	{In what follows, the summations and products over the index sets that are not bounded from below or above are understood in the principal value sense}
\end{footnote}
\begin{equation}\label{eq:F-new}
	F(z) =  \sum_{k\in I_1} \frac{\overline{a_k}b_k}{\lambda_k - z} + 1 
\end{equation}
and thus can be analytically extended to $\sigma_0(A)$; we keep the notation~$F$ for this extension. 

As proved in~\cite{DobHry20}, the geometric multiplicity of every eigenvalue~$\mu$ of~$B$ is at most~$2$; multiplicity~$2$ is only possible when~$\mu \in \sigma_0(A)$, i.e., $\mu = \lambda_n$ for some $n\in I_0$ and, in addition, $a_n = b_n = F(\lambda_n) = 0$. We also observe that when $a_n = b_n =0$, then the subspace $\ls\{v_n\}$ is invariant under both~$B$ and $B^*$ and thus is reducing for~$B$. Denoting by~$H_0$ the closed linear span of all such subspaces, we conclude that $H_0$ and $H \ominus H_0$ are reducing for~$B$ and the operators~$A$ and $B$ coincide on~$H_0$. For that reason, only the part of $B$ in $H \ominus H_0$ is of interest, and, without loss of generality, we may assume that $H_0 = \{0\}$.

Under such an assumption, every eigenvalue $\mu$ of~$B$ is geometrically simple. One of the main results of~\cite{DobHry20} claims that the algebraic multiplicity~$m$ of an eigenvalue $\mu$ of $B$ coincides with the multiplicity $l$ of~$z = \mu$ as a zero of~$F$ if $\mu \not\in \sigma_0(A)$ and is equal to $l+1$ otherwise. In other words, the common spectrum $\sigma_0(A) = \sigma_0(B)$ of $A$ and $B$ and the zeros of $F$ completely characterize the spectrum of $B$, counting with multiplicities. This allows us to reduce the study of the eigenvalue distribution for the perturbation~$B$ to the study of zero distribution of the characteristic function~$F$. 

%%%%%%%%%%%%%%%%%%%%%%%%%%%%%%%%%%%%%%%%%%%%%%%%%%%%%%%%%%%%%%%%%%

%%%%%%%%%%%%%%%%%%%%%%%%%%%%%%%%%%%%%%%%%%%%%%%%%%%%%%%%%%%%%%%%%%

\section{Eigenvalue distribution of the operator $B$}\label{sec:asymptotics}

In this section, we shall discuss eigenvalue distribution of the rank-one perturbation~$B$ of~$A$ given by~\eqref{eq:B}. 
As explained in the previous section, the spectrum of $B$ consists of two parts: $\sigma_0(B) = \sigma(A) \cap \sigma(B)$, which is the common part of the spectra of $A$ and $B$, and $\sigma_1(B)$, which is the set of zeros of the characteristic function
\[
    F(z) = \sum_{n \in I_1} \frac {\overline {a_n} b_n} {\lambda_n - z} + 1
\]
in the domain $\mathbb{C}\setminus\sigma_1(A)$; moreover, the algebraic multiplicity of an eigenvalue~$\mu \in \sigma(B)$ is determined by its multiplicity as a zero of the characteristic function~$F$.

The main result of this section is given by the following theorem.

\begin{theorem}\label{thm:EV-asympt}
The eigenvalues of the operator $B$ can be labelled as $\mu_n$, $n \in I$, in such a way that the series
\begin{equation}\label{eq:mu-converge}
    \sum_{n \in I} | \mu_n - \lambda_n|
\end{equation}
converges. In particular, all but finitely many eigenvalues of~$B$ are simple.
\end{theorem}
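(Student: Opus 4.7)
The plan is to locate the eigenvalues of $B$ close to those of $A$ via a local Rouché argument, then account for all of them by a global winding-number count on a large contour. The starting observation is that $c_n := \overline{a_n}b_n$ lies in $\ell^1(I_1)$, by Cauchy--Schwarz from $(a_n),(b_n) \in \ell^2(I)$. Accordingly, the target bound $\sum|\mu_n-\lambda_n|<\infty$ should follow from the quantitative estimate $|\mu_n-\lambda_n| = O(|c_n|)$ for $|n|$ large.

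For the local step, fix $n\in I_1$ and split
\[
    F(z) = \frac{c_n}{\lambda_n - z} + H_n(z), \qquad H_n(z) := 1 + \sum_{k\in I_1,\ k\ne n}\frac{c_k}{\lambda_k - z}.
\]
By (A2), $H_n$ is holomorphic on $\{|z-\lambda_n|<d/2\}$. The key lemma is that $\sup_{|z-\lambda_n|\le d/4}|H_n(z)-1| \to 0$ as $|n|\to\infty$: this follows from the separation bound $|\lambda_k-z|\ge \tfrac{3}{4}d|k-n|$ combined with a tail-vs-bulk split of the series, where the bulk $\{|k-n|\le N\}$ is small because $\ell^1$ sequences have vanishing windows, and the tail $\{|k-n|>N\}$ is uniformly controlled by $\|c\|_{\ell^1}/N$. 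Hence, for $|n|\ge N_0$ one has $|H_n|\ge\tfrac12$ on $\{|z-\lambda_n|\le d/4\}$ and $|c_n|<d/16$. Setting $G_n(z) := (\lambda_n-z)F(z) = c_n + (\lambda_n-z)H_n(z)$, on the circle $|z-\lambda_n| = 4|c_n|$ one has $|(\lambda_n-z)H_n(z)|\ge 2|c_n| > |c_n|$, so Rouché's theorem applied to $G_n$ and $(\lambda_n-z)H_n(z)$ produces a unique simple zero $\mu_n$ of $G_n$ in this disk. Since $G_n(\lambda_n)=c_n\ne 0$, in fact $\mu_n\ne\lambda_n$ and $F(\mu_n)=0$; by the multiplicity result recalled from \cite{DobHry20}, $\mu_n$ is then a simple eigenvalue of $B$ with $|\mu_n-\lambda_n|<4|c_n|$. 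For $n\in I_0$ set $\mu_n := \lambda_n$, which contributes nothing to the sum.

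To finish, the finitely many indices $n\in I_1$ with $|n|<N_0$ must be paired with the finitely many remaining zeros of $F$. For this I would invoke the argument principle on contours $|z|=R$, with $R\to\infty$ chosen in mid-gaps of $\sigma(A)$: combining $(c_k)\in\ell^1$ with the growth of $|\lambda_k-z|$ for $z$ far from $\sigma(A)$ yields $|F(z)-1|<1$ on such contours for $R$ large, so that the winding of $F$ around $0$ vanishes and the total zero count in $\{|z|\le R\}$ equals the pole count $|I_1\cap\{|\lambda_n|<R\}|$. Subtracting the zeros $\mu_n$ already located for $N_0\le|n|$ with $|\lambda_n|<R$ leaves exactly $|I_1\cap\{|n|<N_0\}|$ unaccounted zeros in a bounded set, which are then labelled arbitrarily by the remaining indices. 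The resulting series satisfies $\sum|\mu_n-\lambda_n|\le 4\|c\|_{\ell^1} + (\text{finite contribution}) < \infty$, and all but finitely many $\mu_n$ are simple by the local uniqueness.

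The main obstacle I anticipate is the global step, namely securing $|F(z)-1|<1$ on a circle $|z|=R$ \emph{uniformly in direction}. Away from the real axis, decay of $1/|\lambda_k-z|$ is immediate, but along the real axis, with $z$ possibly squeezed between very distant eigenvalues $\lambda_n$ and $\lambda_{n+1}$, one must again split the sum $\sum c_k/(\lambda_k-z)$ into near indices (small because $c_k\to 0$) and far indices (small by the separation (A2) combined with the $\ell^1$ tail). Careful choice of $R$ in mid-gaps of $\sigma(A)$, together with these two-scale estimates, is what makes the winding-number argument go through.
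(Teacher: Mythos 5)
Your proposal is correct and follows essentially the same route as the paper: summability of $c_n=\overline{a_n}b_n$ by Cauchy--Schwarz, a local Rouch\'e argument isolating one simple zero of $F$ within $O(|c_n|)$ of each large $\lambda_n$, and a global zero-versus-pole count on expanding contours to capture the finitely many remaining eigenvalues. The only differences are cosmetic: the paper uses rectangles $\partial Q_k$ (whose boundaries stay at distance $d/2$ from all $\lambda_n$ by construction, which sidesteps your ``mid-gap'' worry) and compares $F$ with the truncated rational function $H_k$ instead of computing the winding number of $F$ directly, and it extracts the bound $|\mu_n-\lambda_n|\lesssim|c_n|$ from the equation $F(\mu_n)=0$ rather than from the Rouch\'e radius.
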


First we shall show that large enough elements of~$\sigma_1(B)$ are located near $\sigma_1(A)$, which will enable their proper enumeration. To begin with, for $k \in I_1$ we define the functions $G_k$ and $H_k$ by the formulas%
\begin{footnote}
{Throughout the paper, the symbol $\sum{\hspace*{-2pt}\vphantom{\sum}}^{(1)}$ will denote summation over the index set $I_1$}
% and is taken in the principal value sense if $I_1$ unbounded from below and above}
\end{footnote}
\[
    G_k(z) = \frac{\overline {a_k} b_k} {\lambda_k - z} + 1,
    \qquad
    H_k(z) = \sumI_{|n| \le k} \frac {\overline {a_n} b_n} {\lambda_n - z} + 1
\]
and %for $\varepsilon\in(0,d)$ small enough we 
introduce the sets
\begin{gather*}
%\begin{align*}
  Q_k
    {:=} \{z \in \mathbb{C} \, \mid \,
		\myRe(z),\myIm(z) \in [\lambda_{-|k|} - \tfrac{d}2, \lambda_{|k|} + \tfrac{d}2]
            \},\\
	 R_k {:=} \{z \in \mathbb {C} \, \mid \, |z - \lambda_k|  < \tfrac{d}2 \},
%\end{align*}
\end{gather*}
where we replace $\lambda_{-|k|}$ with $-\lambda_{|k|}$ if $I=\mathbb{N}$.
Due to the assumption~(A2) the sets $R_k$ are pairwise disjoint and also $R_k \cap Q_n = \emptyset$ if $|k|>|n|$.

\begin{lemma}\label{lem:Keps}
For every $\eps >0$ there exist integers $K_\eps>0$ and $K_\eps'>K_\eps$ such that the following holds:
\begin{itemize}
  \item[(a)] for every $k$ with $|k|> K_\eps$ and every $z \in \overline{R_k} = \partial R_k \cup R_k$
    \begin{equation}\label{es:6}
        \sumI_{\substack{|n| >  K_\eps \\ n \ne k}} \left | \frac {\overline {a_n} b_n} {\lambda_n - z} \right | < \frac{2\eps} {d};
    \end{equation}
  \item[(b)] for every $z \in \mathbb{C}\setminus Q_{K_\eps'}$
    \begin{equation}\label{es:5}
        \sumI_{|n| \le K_\eps} \left | \frac{\overline {a_n} b_n}{\lambda_n - z} \right | < \eps.
    \end{equation}   
\end{itemize}
\end{lemma}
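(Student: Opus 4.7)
The whole lemma hinges on one standard input: since $\varphi,\psi\in H$, the Cauchy--Schwarz inequality gives
\[
  \sumI_{k} |\overline{a_k}b_k|\le \|\varphi\|\,\|\psi\|<\infty,
\]
so the tails of this positive series can be made as small as I please. Both parts then follow from elementary distance estimates based on the separation hypothesis~(A2); no deeper input (e.g.\ on eigenvalue asymptotics, which are the subject of Theorem~\ref{thm:EV-asympt}) is needed here.

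For part~(a) I would fix $k$ with $|k|>K_\eps$ and $z\in\overline{R_k}$, so that $|z-\lambda_k|\le d/2$. For every $n\ne k$ in $I_1$, assumption~(A2) yields $|\lambda_n-\lambda_k|\ge d$, and the reverse triangle inequality gives $|\lambda_n-z|\ge d/2$. Consequently the left-hand side of~\eqref{es:6} is bounded above by $(2/d)\sumI_{|n|>K_\eps}|\overline{a_n}b_n|$, and choosing $K_\eps$ so large that this tail is smaller than $\eps$ closes the estimate. This step is essentially automatic.

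For part~(b) the key observation is geometric: for $|n|\le K_\eps$, each $\lambda_n$ sits in $[\lambda_{-K_\eps},\lambda_{K_\eps}]$, deep inside the box $Q_{K_\eps'}$ once $K_\eps'\gg K_\eps$. I would split according to how $z$ leaves $Q_{K_\eps'}$. If $\myRe(z)$ is outside $[\lambda_{-K_\eps'}-d/2,\,\lambda_{K_\eps'}+d/2]$, the separation~(A2) forces $|\myRe(z)-\lambda_n|\ge d(K_\eps'-K_\eps)+d/2$; if instead $\myIm(z)$ leaves that interval, then $|\myIm(z)|$ exceeds $\min(\lambda_{K_\eps'}+d/2,\,|\lambda_{-K_\eps'}|-d/2)$, with the obvious simplification when $I=\mathbb{N}$. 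Either way I obtain a lower bound $|z-\lambda_n|\ge M(K_\eps')$, uniform in $n$ with $|n|\le K_\eps$, where $M(K_\eps')\to\infty$ as $K_\eps'\to\infty$ thanks to~(A1)--(A2). Therefore
\[
  \sumI_{|n|\le K_\eps}\left|\frac{\overline{a_n}b_n}{\lambda_n-z}\right|
  \le \frac{\|\varphi\|\,\|\psi\|}{M(K_\eps')},
\]
which drops below $\eps$ once $K_\eps'$ is taken large enough.

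The only real obstacle I foresee is the geometric bookkeeping in~(b): treating the two-sided case $I=\mathbb{Z}$ and the semibounded case $I=\mathbb{N}$ (in which $\lambda_{-|k|}$ is replaced by $-\lambda_{|k|}$) uniformly, and condensing the two escape directions into a single lower bound $M(K_\eps')$. Apart from that, the whole argument is just Cauchy--Schwarz plus the reverse triangle inequality.
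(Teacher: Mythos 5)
Your argument is correct and follows essentially the same route as the paper: Cauchy--Schwarz gives $\sumI_n|\overline{a_n}b_n|<\infty$ so the tail can be made less than $\eps$, the separation hypothesis (A2) plus the reverse triangle inequality gives $|\lambda_n-z|\ge d/2$ on $\overline{R_k}$ for part (a), and for part (b) the distance from $\lambda_n$ ($|n|\le K_\eps$) to $\mathbb{C}\setminus Q_{K_\eps'}$ grows without bound as $K_\eps'\to\infty$ (the paper states this as $|\lambda_n-z|>(K_\eps'-K_\eps)d$, while you package it as a bound $M(K_\eps')\to\infty$ with slightly more explicit case analysis of how $z$ leaves the box). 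No substantive difference.
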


\begin{proof}
The sequences $(a_n)_{n\in I}$ and $(b_n)_{n\in I}$ of the Fourier coefficients of the vectors $\phi$ and $\psi$ are square summable, so that, by the Cauchy--Bunyakowsky--Schwarz inequality,
\[
    \sum_{n \in I_1} |{\overline {a_n} b_n}| < \infty.
\]
Therefore, for every $\eps>0$ there exists a $K_\eps$ such that
\[
    \sumI_{|n| > K_\eps} |{\overline {a_n} b_n}|< \eps.
\]
Take a $k$ satisfying $|k|> K_\eps$; then by virtue of Assumption~(A2) for every $z \in \overline{R_k}$ and every $n\ne k$ we get $|\lambda_n - z| \ge \tfrac{d}2$, and therefore \eqref{es:6} holds.

For part (b), note that $|\lambda_n - z| > (K'_\eps- K_\eps)d$ if $|n| \le K_\eps$ and $z \in \mathbb{C}\setminus Q_k$ with $|k|\ge K'_\eps>K_\eps$; therefore, by choosing $K'_\eps$ large enough, we arrive at~\eqref{es:5}.
\end{proof}

\begin{corollary}\label{cor:zeros-local}
	Take $\eps := d/(2+d)$ and $K'_\eps$ as in the above lemma; then 
	\[
		\sigma(B) \subset Q_{K'_\eps} \cup \Bigl(\bigcup\nolimits_{n\in I} R_n \Bigr).
	\]
	
	Indeed, it suffices to note that if $z$ is outside $Q_{K'_\eps}$ and every $R_n$, $n\in I$, then $|\lambda_n - z|\ge d/2$, so that 
	\[
		\sumI_{\substack{|n| >  K_\eps}} \left | \frac {\overline {a_n} b_n} {\lambda_n - z} \right | < \frac{2\eps} {d},
	\]
	which together with part (b) of that lemma shows that 
	\[
		|F(z)| \ge 1 - \sumI_{\substack{|n| >  K_\eps}} \left | \frac {\overline {a_n} b_n} {\lambda_n - z} \right | 
			> 1 - \eps (1 + 2/d) = 0.
	\]
\end{corollary}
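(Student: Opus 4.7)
The plan is to show that any $z$ lying outside $Q_{K'_\eps} \cup \bigcup_{n\in I} R_n$ cannot be an eigenvalue of $B$. Recall from Section~\ref{sec:general} that the spectrum of $B$ decomposes into $\sigma_0(B) = \sigma(A) \cap \sigma(B) \subset \{\lambda_n\}_{n\in I}$ and the set of zeros of the (extended) characteristic function $F$. Since every $\lambda_n$ belongs to $R_n$ by definition, the first part is automatically covered, and the task reduces to showing $F(z) \ne 0$ for any such $z$.

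First I would decompose
\[
    F(z) = 1 + \sumI_{|n|\le K_\eps} \frac{\overline{a_n} b_n}{\lambda_n - z} + \sumI_{|n|>K_\eps} \frac{\overline{a_n} b_n}{\lambda_n - z}.
\]
For the low-index partial sum, part~(b) of Lemma~\ref{lem:Keps} directly produces a bound of~$\eps$, because $z \notin Q_{K'_\eps}$. For the tail, the assumption $z \notin R_n$ for every $n$ together with~(A2) gives $|\lambda_n - z| \ge d/2$ uniformly in $n$, and combining this with the absolute-summability bound $\sumI_{|n|>K_\eps} |\overline{a_n} b_n| < \eps$ obtained inside the proof of the preceding lemma yields a tail bound of $2\eps/d$. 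The reverse triangle inequality then delivers $|F(z)| > 1 - \eps(1 + 2/d)$.

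The final step is the calibration: the choice $\eps = d/(2+d)$ makes $\eps(1 + 2/d)$ exactly equal to $1$, so the previous estimate collapses to $|F(z)| > 0$, and $z$ cannot be an eigenvalue of $B$. I do not foresee any serious obstacle: the argument is a direct assembly of Lemma~\ref{lem:Keps} with the eigenvalue characterization from Section~\ref{sec:general}. The only mild care point is that the tail bound needed here is a slight variant of Lemma~\ref{lem:Keps}(a)---the latter handles $z$ in a single closed disk $\overline{R_k}$, whereas we need the same estimate uniformly over $z$ in the complement of \emph{all} $R_n$---but the key ingredient $|\lambda_n - z| \ge d/2$ is already supplied inside the proof of~(a) and transfers without change.
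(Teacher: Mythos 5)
Your argument is correct and is essentially identical to the paper's own: both reduce the claim to showing $F(z)\neq 0$ off $Q_{K'_\eps}\cup\bigcup_n R_n$, bound the low-index sum by $\eps$ via Lemma~\ref{lem:Keps}(b), bound the tail by $2\eps/d$ using $|\lambda_n-z|\ge d/2$ together with $\sumI_{|n|>K_\eps}|\overline{a_n}b_n|<\eps$, and observe that $\eps=d/(2+d)$ makes $1-\eps(1+2/d)=0$. Your explicit remark that the tail estimate is a variant of part~(a) valid on the complement of all the disks $R_n$, and your note that $\sigma_0(B)\subset\{\lambda_n\}\subset\bigcup_n R_n$, only make explicit what the paper leaves implicit.
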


\begin{lemma}\label{lem:zeros-local}
There exists $K>0$ such that for all $k\in I_1$ with $|k| > K$ the following holds:
\begin{itemize}
  \item[(a)] the function $F$ has exactly one zero in $R_k$;
  \item[(b)] the functions $H_k$ and $F$ have the same number of zeros in $Q_k$.
\end{itemize}
\end{lemma}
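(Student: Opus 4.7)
My plan is to apply Rouch\'e's theorem, in its meromorphic form, to the pairs $(F,G_k)$ on $\partial R_k$ for part~(a) and $(F,H_k)$ on $\partial Q_k$ for part~(b). In each case $F$ and the comparison function share the same simple poles with the same residues inside the region, so the differences $F-G_k$ and $F-H_k$ are holomorphic there; once the estimate $|F-\star|<|\star|$ is verified on the boundary, equality of the zero counts follows immediately.

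For part~(a), write $F-G_k=\sumI_{n\ne k}\overline{a_n}b_n/(\lambda_n-z)$. Fix $\varepsilon>0$ with $\varepsilon(1+2/d)<1/2$ and let $K_\varepsilon,K'_\varepsilon$ be as in Lemma~\ref{lem:Keps}. Taking $|k|$ large enough that $\overline{R_k}$ is disjoint from $Q_{K'_\varepsilon}$, I use Lemma~\ref{lem:Keps}(b) to bound the partial sum over $|n|\le K_\varepsilon$ by $\varepsilon$ on $\partial R_k$, and Lemma~\ref{lem:Keps}(a) to bound the complementary tail ($|n|>K_\varepsilon$, $n\ne k$) by $2\varepsilon/d$, giving $|F-G_k|<1/2$ on $\partial R_k$. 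On the same contour $|G_k(z)|\ge 1-2|\overline{a_k}b_k|/d$, which exceeds $1/2$ for $|k|$ large, because $|\overline{a_k}b_k|\to 0$ by square-summability of $(a_n)$ and $(b_n)$. Rouch\'e then gives that $F$ has the same number of zeros in $R_k$ as $G_k$; the latter has its single zero $\lambda_k+\overline{a_k}b_k$, which lies in $R_k$ as soon as $|\overline{a_k}b_k|<d/2$, so part~(a) follows.

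Part~(b) runs in the same spirit on $\partial Q_k$. Since $|\lambda_n-z|\ge d/2$ for every $n\in I$ whenever $z\in\partial Q_k$, the identity $F-H_k=\sumI_{|n|>k}\overline{a_n}b_n/(\lambda_n-z)$ yields $|F-H_k|\le(2/d)\sum_{|n|>k,\,n\in I_1}|\overline{a_n}b_n|\to 0$ as $|k|\to\infty$. For the lower bound on $|H_k|$, I split $H_k-1$ into its pieces over $|n|\le K_\varepsilon$ and $K_\varepsilon<|n|\le k$: Lemma~\ref{lem:Keps}(b) controls the first by $\varepsilon$ (since $\partial Q_k$ is disjoint from $Q_{K'_\varepsilon}$ once $|k|>K'_\varepsilon$), while the estimate $|\lambda_n-z|\ge d/2$ combined with the square-summable tail bound $\sum_{|n|>K_\varepsilon}|\overline{a_n}b_n|<\varepsilon$ controls the second by $2\varepsilon/d$, so $|H_k|\ge 1/2$ on $\partial Q_k$. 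Taking $|k|$ still larger to make $|F-H_k|<1/2$, Rouch\'e gives $N(F,Q_k)=N(H_k,Q_k)$, as required.

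I anticipate the main obstacle to be the lower bound on $|H_k|$ on $\partial Q_k$: the naive estimate $|H_k|\ge 1-(2/d)\sum_{|n|\le k}|\overline{a_n}b_n|$ is useless, as that sum may grow arbitrarily large in $k$. The resolution is precisely the split furnished by Lemma~\ref{lem:Keps}, which separates a finite set of low-index "bad" poles---from which $z\in\partial Q_k$ is nevertheless far once $\partial Q_k$ has escaped $Q_{K'_\varepsilon}$---from the high-index tail, where square-summability of the products $\overline{a_n}b_n$ closes the estimate.
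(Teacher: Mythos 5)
Your proposal is correct and follows essentially the same route as the paper's proof: the same comparison functions $G_k$ on $\partial R_k$ and $H_k$ on $\partial Q_k$, the same three-way splitting of the tail sums furnished by Lemma~\ref{lem:Keps}, and the same application of the meromorphic form of Rouch\'e's theorem (the paper phrases the pole-matching as ``same number of poles'' rather than holomorphy of the difference, but the effect is identical). The only differences are cosmetic choices of constants ($1/2$ thresholds versus the paper's condition $\eps(1+4/d)<1$).
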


\begin{proof}
Fix an $\eps\in(0,d/2)$ such that
\[
    \eps \Bigl(1 + \frac4d \Bigr) <1;
\]
we shall show that (a) and (b) hold for $K = K'_\eps$ of Lemma~\ref{lem:Keps}. 

If $k$ satisfies $|k| > K$, then by Lemma~\ref{lem:Keps} for every $z \in \partial R_k$ we get
\begin{equation*}\label{es:7}
%\begin{aligned}
    |F(z) - G_k(z)|
         \le \sumI_{|n| \le K_\eps} \left | \frac{\overline{a_n} b_n}{\lambda_n - z} \right|
        + \sumI_{\substack{|n| >  K_\eps\\ n \neq k}}
            \left| \frac {\overline{a_n} b_n}{\lambda_n - z} \right|
         < \eps + \frac{2\eps} {d}.
%\end{aligned}
\end{equation*}
On the other hand, $|\overline{a_k} b_k| < \eps$ if $k\in I_1$ satisfies $|k| > K> K_\eps$, and then
\begin{equation}\label{es:8}
%\begin{aligned}
    |G_k(z)|
        \ge 1 - \left| \frac{\overline{a_k} b_k}{\lambda - \lambda_k} \right|
        > 1 -  \frac{2\eps}{d}
%\end{aligned}
\end{equation}
for all $z \in \partial R_k$. By the choice of $\eps$ we conclude that then
\begin{equation}\label{es:9}
|G_k(z)| > |F(z) - G_k(z)|
\end{equation}
for all such $z$. As the functions $G_k$ and $F$ both have  the same number of poles in $R_k$ (namely, a simple pole at $\lambda_k$), by estimate \eqref{es:9} and Rouche's theorem they have the same number of zeros in the set $R_k$. By virtue of inequality \eqref{es:8}, the unique zero $z = \lambda_k + \overline{a_k} b_k$ of the function $G_k$ belongs to the circle $R_k$ for all $k\in I_1$ with $|k|> K$, and thus the function $F$ has exactly one zero in $R_k$ for such $k$ as well. This completes the proof of part~(a).

Next, by the definition of the set $Q_k$, it holds that
$|\lambda_n - z| \ge \tfrac{d}2$ if $z \in \partial Q_k$ and $|n|> |k|$. 
By the choice of the number $K_\eps$, we find that
\begin{equation*}\label{es:13}
    |F(z) - H_k(z)|
         \le \sumI_{|n| > |k|} \left|  \frac {\overline {a_n} b_n} {\lambda_n - z} \right|
         < \frac {2\eps} {d}
\end{equation*}
and 
\begin{equation}\label{es:12}
    \sumI_{K_\eps < |n| \le |k|}
        \left| \frac{\overline {a_n} b_n}{\lambda_n - z} \right|
        < \frac{2\eps}{d}
\end{equation}
if $|k|> K_\eps$ and $z \in \partial Q_k$. Also, by part~(b) of Lemma~\ref{lem:Keps} we have
\begin{equation}\label{es:10} 
   \sumI_{|n|\le K_\eps} \left | \frac{\overline {a_n} b_n}{\lambda_n - z} \right | < \eps
\end{equation}
as soon as $|k| > K$ and $z \in \mathbb{C}\setminus Q_k$.
Combining estimates \eqref{es:12} and \eqref{es:10}, we conclude that
\begin{equation}\label{es:14}
%\begin {aligned}
    |H_k(z)|  \ge 1 - \sumI_{|n| \le k}
        \left |\frac{\overline {a_n} b_n}{\lambda_n - z} \right|
        > 1 - \eps - \frac{2\eps}{d}
%\end {aligned}
\end{equation}
for all $k$ with $|k| > K$ and all $z \in \mathbb {C} \setminus Q_k$.

It follows that for $k$ with $|k| > K$ and for all $z \in \partial Q_k$
\begin{equation*}\label{es:15}
|H_k(z)| > |F(z) - H_k(z)|.
\end{equation*}
Since the functions $H_k$ and $F$ have the same poles in $Q_k$ (namely, simple poles $\lambda_n$ for $n\in I_1$ with $|n|\le k$), we conclude by Rouche's theorem that they have the same number of zeros in $Q_k$ for all $k>K$. The proof is complete.
\end{proof}

\begin{remark}
Take $k$ larger than $K$ of the above lemma and denote by $N_k$ the cardinality of the set $\sigma_1(A)\cap Q_k$. The function~$H_k$ is a ratio of two polynomials of degree $N_k$ and due to~\eqref{es:14} all its zeros are in $Q_k$. Therefore, the function~$F$ has precisely~$N_k$ zeros in~$Q_k$ counting with multiplicities.
\end{remark}

\begin{corollary}\label{cor:F-loc}
The zeros of $F$ in $\mathbb{C}\setminus \sigma_0(A)$ can be labelled (counting with multiplicities) as $\mu_k$ with $k\in I_1$ in such a way that $|\mu_k - \lambda_k| < \tfrac{d}2$ for all $k\in I_1$ with $|k|>K$.
\end{corollary}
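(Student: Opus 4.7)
The plan is essentially bookkeeping combining Lemma~\ref{lem:zeros-local}, the Remark following it, and Corollary~\ref{cor:zeros-local}; no new analytic input is required. First, I would invoke part~(a) of Lemma~\ref{lem:zeros-local} to define, for each $k\in I_1$ with $|k|>K$, the label $\mu_k$ as the unique zero of $F$ in $R_k$. Because $R_k$ is the open disk of radius $d/2$ centred at $\lambda_k$, this immediately yields the required estimate $|\mu_k-\lambda_k|<d/2$. Pairwise disjointness of the $R_k$ (Assumption~(A2)) ensures that distinct indices are assigned to distinct zeros.

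Next, I would match the remaining zeros of $F$ with the indices $\{k\in I_1:|k|\le K\}$ by a counting argument. Let $N_K^+:=|\{n\in I_1:|n|\le K\}|$ denote the number of these remaining indices. By the Remark after Lemma~\ref{lem:zeros-local}, for every $|k|>K$ the function~$F$ has precisely $N_k$ zeros in $Q_k$ counted with multiplicities; the first step above identifies $N_k-N_K^+$ of them, one per $R_n$ with $n\in I_1$ and $K<|n|\le|k|$. Hence exactly $N_K^+$ additional zeros of $F$ lie in $Q_k\setminus\bigcup\nolimits_{n\in I_1,\,K<|n|\le|k|}R_n$, and this ``inner'' collection is independent of $|k|$ once $|k|$ is large enough. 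Corollary~\ref{cor:zeros-local} then guarantees that every zero of $F$ in $\bC\setminus\sigma_0(A)$ is either one of these inner zeros or one of the $\mu_k$ from the first step, since every zero of~$F$ eventually belongs to $Q_k$ as $|k|\to\infty$. Labelling the inner zeros in any order by the $N_K^+$ indices $k\in I_1$ with $|k|\le K$ completes the construction.

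I do not anticipate a substantive obstacle: all the Rouché-type analytic work has been done in Lemma~\ref{lem:zeros-local}. The only mildly delicate step is the counting argument above---verifying that the number of ``inner'' zeros stabilises at exactly $N_K^+$---but this follows directly from the cited results.
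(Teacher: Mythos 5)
Your argument is correct and is precisely the (implicit) argument of the paper, which states this corollary without proof as an immediate consequence of Lemma~\ref{lem:zeros-local}(a) (one zero per $R_k$ for $|k|>K$), the Remark counting $N_k$ zeros in $Q_k$, and Corollary~\ref{cor:zeros-local} localizing all zeros. The bookkeeping you supply — assigning the finitely many ``inner'' zeros to the indices $k\in I_1$ with $|k|\le K$ — is exactly the intended completion.
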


Recalling the results of the previous section on relation between the eigenvalues of $B$ and zeros of the function~$F$ in $\mathbb{C}\setminus\sigma_1(A)$, we arrive at the following conclusion.

\begin{corollary}\label{cor:mu-labeling}
  Eigenvalues of the operator~$B$ can be labelled (counting with mulitplicities) as $\mu_k$ with $k\in I$ in such a way that $|\mu_k - \lambda_k|< \tfrac{d}2$ when $|k|>K$, $K$ being the constant of Lemma~\ref{lem:zeros-local}.
\end{corollary}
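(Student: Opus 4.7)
The plan is to bootstrap Corollary~\ref{cor:F-loc} (which labels the zeros of $F$ by $I_1$) to a labelling of the full spectrum of $B$ by the full index set $I=I_0\sqcup I_1$, by assigning the ``extra'' index $n\in I_0$ directly to the common eigenvalue $\lambda_n\in\sigma_0(A)=\sigma_0(B)$. Concretely, I would set
\[
    \mu_n := \lambda_n \qquad \text{for } n \in I_0,
\]
and for $k\in I_1$ take $\mu_k$ to be the zero of $F$ supplied by Corollary~\ref{cor:F-loc}. Since $I_0$ and $I_1$ partition $I$, this defines $\mu_k$ for every $k\in I$.

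The first step is to check that this combined labelling really enumerates the eigenvalues of $B$ with correct algebraic multiplicities. Recall from Section~\ref{sec:general} that, under the standing reduction $H_0=\{0\}$, the algebraic multiplicity of an eigenvalue $\mu$ of $B$ equals its multiplicity as a zero of $F$ when $\mu\notin\sigma_0(A)$, and equals that multiplicity plus one when $\mu=\lambda_n$ for some $n\in I_0$. Thus each $\lambda_n$ with $n\in I_0$ receives exactly one index from $I_0$ (the ``$+1$'') together with $\mathrm{mult}_{\lambda_n}(F)$ indices from $I_1$ via the Corollary~\ref{cor:F-loc} enumeration, while every zero $\mu\notin\sigma_0(A)$ of $F$ receives precisely $\mathrm{mult}_\mu(F)$ indices from $I_1$. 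The totals match the algebraic multiplicities of $B$, and no eigenvalue is either double-counted or omitted.

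The second step, the bound itself, is then immediate. Fix $k\in I$ with $|k|>K$, where $K$ is the constant of Lemma~\ref{lem:zeros-local}. If $k\in I_0$, then by construction $\mu_k=\lambda_k$ and $|\mu_k-\lambda_k|=0<\tfrac{d}{2}$. If $k\in I_1$, the bound $|\mu_k-\lambda_k|<\tfrac{d}{2}$ is exactly the content of Corollary~\ref{cor:F-loc}. The only genuine content of the statement is therefore the multiplicity bookkeeping outlined above; the principal obstacle, if any, is simply ensuring that the two disjoint index blocks $I_0$ and $I_1$ combine to give a consistent enumeration of $\sigma(B)$ counting algebraic multiplicities, which reduces to the multiplicity formula quoted from~\cite{DobHry20} and the counting argument in the remark preceding Corollary~\ref{cor:F-loc}.
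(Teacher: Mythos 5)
Your proposal is correct and follows essentially the same route as the paper, which proves this corollary in one line by combining Corollary~\ref{cor:F-loc} with the multiplicity dictionary of Section~\ref{sec:general} (indices in $I_0$ account for the extra unit of algebraic multiplicity at each $\lambda_n\in\sigma_0(A)$, indices in $I_1$ for the zeros of $F$); you merely make the bookkeeping explicit.
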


\begin{proof}[Proof of Theorem~\ref{thm:EV-asympt}]
We fix an enumeration of $\mu_k$ as in Corollary~\ref{cor:mu-labeling}. Then $\mu_k = \lambda_k$ for all $k\in I_0$ with sufficiently large $|k|$, whence it suffices to prove that the series
\[
    \sum_{n \in I_1} |\mu_n - \lambda_n|
\]
is convergent. 

We take $\eps$ and $K$ as in Lemma~\ref{lem:zeros-local}; then, according to Corollary~\ref{cor:mu-labeling}, for every $k\in I_1$ with $|k|>K$ the eigenvalue $\mu_k \in R_k$ is a zero of~$F$, so that
$$
    F(\mu_k) = \sumI_{|n| \le K_\eps} \frac{\overline{a_n} b_n}{\mu_k - \lambda_n}
        + \sumI_{\substack{|n| >  K_\eps\\ n \neq k}} \frac {\overline{a_n} b_n} {\mu_k - \lambda_n}
        + \frac {\overline{a_k} b_k} {\mu_k - \lambda_k} +1 = 0
$$
and 
\[
    \left|\frac {\overline{a_k} b_k} {\mu_k - \lambda_k} \right| 
        > 1 - \sumI_{|n| \le K_\eps} \left|
        \frac{\overline{a_n} b_n}{\lambda_n - \mu_k} \right|
            - \sumI_{\substack{|n| >  K_\eps\\ n \neq k}}
        \left|\frac{\overline{a_n} b_n}{\lambda_n - \mu_k} \right|.
\]
By virtue of Lemma~\ref{lem:zeros-local} we conclude that 
$$
    \left|\frac {\overline{a_k} b_k} {\mu_k - \lambda_k} \right | 
        > 1 - \eps - \frac {2\eps} {d}
$$
for $k \in I_1$ with $|k|>K$. 
 Since $1 - \eps - \frac {2\eps} {d} > \frac {2\eps} {d}$, we find that 
\begin{equation}\label{eq:19}
    |\mu_k - \lambda_k| 
        < \frac{d}{2\eps}|\overline {a_k} b_k|
\end{equation}
for $k\in I_1$ with $|k| > K$. As the series $\sum_{n \in I_1} |\overline{a}_n b_n|$ is convergent, the same is true of the series $\sum_{n \in I_1} |\mu_n - \lambda_n|$, and the proof is complete.
\end {proof}

%%%%%%%%%%%%%%%%%%%%%%%%%%%%%%%%%%%%%%%%%%%%%%%%%%%%%%%

\section{Inverse spectral problem}\label{sec:inverse}

The purpose of this section is to study the inverse spectral problem, namely, the problem of reconstructing the operator~$B$ from its spectrum~$(\mu_n)_{n\in I}$ assuming that the operator $A$ is known.

More generally, let the operator~$A$ satisfy assumptions (A1) and (A2), i.e., is self-adjoint and has a simple discrete spectrum~$(\lambda_n)_{n\in I}$ that is $d$-separated as in~\eqref{eq:dist}.  Our aim is to find necessary and sufficient conditions that another sequence $(\nu_n)_{n\in I} $ of complex numbers must satisfy so that it could be a spectrum (counting with multiplicities) of an operator~$B$ of the form \eqref {eq:B}. Also, we want to suggest an algorithm of constructing the operator~$ B $ (i.e., the function $ \varphi $ and $\psi$) and investigate the uniqueness question.

The latter question can be answered straight ahead. Indeed, if the inverse problem for a sequence $(\nu_n)_{n \in I}$ has a solution, then it has many solutions. In fact, if
$$
	B_j = A + \langle \cdot, \varphi_j \rangle \psi_j, \quad j = 1,2
$$
and vectors $\varphi_1$, $\varphi_2 $, $ \psi_1 $, $ \psi_2 $ satisfy
$$
	\overline {\langle \varphi_1, v_n \rangle} \langle \psi_1, v_n \rangle
	= \overline {\langle \varphi_2, v_n \rangle} \langle \psi_2, v_n \rangle, \quad n \in \mathbb {N},
$$
then the spectra of $B_1$ and $B_2$ coincide counting with multiplicities. Therefore, in the inverse problem one can only restore the products $\overline{a}_nb_n$ of the Fourier coefficients of the functions $ \varphi $ and $ \psi $, which are the residues of the function $ -F $ of~\eqref{eq:F-new}.

The main result of this section is given by the following theorem.

\begin{theorem}\label{thm:inv}
Assume that a sequence $\bm{\nu}$ of complex numbers can be enumerated as $\nu_n$, $n\in I$, in such a way that the series 
\begin{equation}\label{eq:nu-converge}
	\sum_{n \in I} |\nu_n - \lambda_n| 
\end{equation}
converges. Then there exist vectors $\varphi,\psi\in H$ such that the spectrum of~$B$ coincides with $\bm{\nu}$ counting with multiplicities.
\end{theorem}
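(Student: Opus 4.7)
My plan is to construct the characteristic function~$F$ directly from the prescribed spectrum, extract its residues at~$\{\lambda_n\}$, and then factor them into $\varphi$ and $\psi$. Writing $M := \{\nu_n : n\in I\}$ as a multiset, I first set $I_0 := \{n \in I : \lambda_n \in M\}$ and $I_1 := I \setminus I_0$; the constructed $B$ will have $\sigma_0(B) = \{\lambda_n : n \in I_0\}$ and a characteristic function with residues supported on~$I_1$. A preliminary step is to re-enumerate $\bm{\nu}$ so that $\nu_n = \lambda_n$ for every $n \in I_0$ while still $\sum|\nu_n-\lambda_n|<\infty$. This is permissible because swapping $\nu_n$ and $\nu_m$ (for $n\in I_0$, $\nu_m = \lambda_n$, $m\ne n$) changes only two offsets, and the triangle inequality $|\nu_n-\lambda_m|\le|\nu_n-\lambda_n|+|\lambda_n-\lambda_m|$ makes the sum weakly non-increasing under each swap. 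After this re-enumeration, $\nu_n\ne\lambda_n$ for every $n\in I_1$.

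I would then define
\[
F(z) := \prod_{n\in I_1}\frac{\nu_n-z}{\lambda_n-z} = \prod_{n\in I_1}\Bigl(1+\frac{\nu_n-\lambda_n}{\lambda_n-z}\Bigr)
\]
and verify uniform convergence on compact subsets of $\bC\setminus\{\lambda_n : n\in I_1\}$ via summability of $\sum|\nu_n-\lambda_n|/|\lambda_n-z|$, obtaining a meromorphic function with simple poles exactly at~$\sigma_1(A)$. To pass to a partial-fraction expansion, I consider truncations $F_N(z) := \prod_{n\in I_1,\, |n|\le N}(\nu_n-z)/(\lambda_n-z)$, which are rational with $F_N(\infty)=1$ and hence admit the decomposition $F_N(z) = 1+\sumI_{|n|\le N} c_n^{(N)}/(\lambda_n-z)$ with
\[
c_n^{(N)} = (\nu_n-\lambda_n)\prod_{\substack{k\in I_1,\,k\ne n\\ |k|\le N}}\frac{\nu_k-\lambda_n}{\lambda_k-\lambda_n}.
\]
Dominated convergence as $N\to\infty$ yields $F(z) = 1+\sumI c_n/(\lambda_n-z)$ with $c_n$ the corresponding infinite products. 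The crucial bound $\sum|c_n|<\infty$ follows from (A2): since $|\lambda_k-\lambda_n|\ge d|k-n|$,
\[
\prod_{k\ne n}\Bigl|1+\frac{\nu_k-\lambda_k}{\lambda_k-\lambda_n}\Bigr| \le \exp\Bigl(\tfrac{1}{d}\sum_k|\nu_k-\lambda_k|\Bigr) =: C
\]
uniformly in $n$, so $|c_n|\le C|\nu_n-\lambda_n|$ is summable.

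With $(c_n)\in\ell^1(I_1)$, I set $a_n = b_n = 0$ for $n\in I_0$ and, for $n\in I_1$, write $c_n=\overline{a_n}b_n$ with $|a_n|=|b_n|=\sqrt{|c_n|}$, so $(a_n),(b_n)\in\ell^2(I)$. The vectors $\varphi:=\sum_n a_nv_n$ and $\psi:=\sum_n b_nv_n$ then define the perturbation $B = A + \langle\cdot,\varphi\rangle\psi$ whose characteristic function, per~\eqref{eq:F-new}, is exactly $F$. By the spectrum--multiplicity correspondence recalled in Section~\ref{sec:general}, $\sigma(B)$ (counted with algebraic multiplicities) consists of $\{\lambda_n : n\in I_0\}$, each contributing $1 + m_n$ with $m_n$ the order of the zero of $F$ at $\lambda_n$, together with the remaining zeros of $F$ in $\bC\setminus\sigma_0(A)$; a direct reading of the product form of $F$ shows that the resulting multiset equals~$M$ exactly.

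The main obstacle I anticipate is the careful bookkeeping of the zero--pole cancellations in $F$ at points of $\sigma(A)$ and the verification of the final multiplicity count --- a standard source of off-by-one errors that in fact motivates the re-enumeration in the first step. Justifying that re-enumeration cleanly in the infinite setting (ensuring the iterated swaps converge to a well-defined permutation with finite offset sum) is a secondary technical issue.
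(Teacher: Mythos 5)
Your construction is essentially the paper's: the same product $\tilde F(z)=\prod_{n\in I_1}(\nu_n-z)/(\lambda_n-z)$, the same residues $c_n$ with the same key estimate $|c_n|\le \exp\bigl(d^{-1}\sum_m|\nu_m-\lambda_m|\bigr)\,|\nu_n-\lambda_n|$ giving $\ell_1$-summability, and the same factorization $c_n=\overline{a_n}b_n$ with $|a_n|=|b_n|=\sqrt{|c_n|}$. Two points differ. First, you derive the partial-fraction identity $F(z)=1+\sum_{n\in I_1}c_n/(\lambda_n-z)$ by truncating the product and passing to the limit with a dominated-convergence argument, whereas the paper defines the sum separately and proves it equals $\tilde F$ via Liouville's theorem applied to the difference (Lemma~\ref{lem:F=tildeF}); your route is legitimate and arguably more elementary, provided you note that no zero--pole cancellation occurs in $F_N$ (which holds because $\nu_k=\lambda_n$ forces $n\in I_0$, so the poles indexed by $I_1$ are genuine) and that the termwise limits $c_n^{(N)}\to c_n$ are dominated uniformly in $N$ by the same exponential bound. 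Second, and this is the one substantive flaw: you set $a_n=b_n=0$ for $n\in I_0$. The multiplicity correspondence you invoke from Section~\ref{sec:general} (algebraic multiplicity $=l+1$ at points of $\sigma_0(A)$) is stated there only under the standing reduction $H_0=\{0\}$, i.e.\ under the assumption that $a_n$ and $b_n$ never vanish simultaneously; your choice violates it whenever $I_0\ne\emptyset$. The conclusion survives --- one can argue directly that $\ls\{v_n\}$ is reducing and contributes exactly one extra copy of $\lambda_n$ on top of the zeros of $F$ --- but that extra argument must be supplied. The paper sidesteps the issue by taking $a_n=1/(1+|n|)$ and $b_n=0$ for $n\in I_0$, which keeps $a_nb_n=0$ (so $\lambda_n\in\sigma_0(B)$) while preserving $H_0=\{0\}$, so the quoted multiplicity formula applies verbatim. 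Your re-enumeration step and the final multiplicity count are otherwise in order.
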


Let us denote by $I_0$ the set of indices $n\in I$ for which $\lambda_n$ appears in~$\bm{\nu}$ and set $\Lambda_0 :=\{\lambda_n \mid n\in I_0\}$. Convergence of the series~\eqref{eq:nu-converge} implies that for every $\varepsilon\in(0,d/2)$ there exists a $K>0$ such that $|\nu_n - \lambda_n| < \varepsilon$ for all $n\in I$ with $|n|>K$. Therefore, if $n\in I_0$ and $|n|>K$, then $\nu_n = \lambda_n$, and without loss of generality we may assume that $\nu_n = \lambda_n$ for all $n\in I_0$.

We also set $I_1:= I \setminus I_0$, $\Lambda_1:=\{\lambda_n \mid n\in I_1\}$, and introduce the function
\begin{equation}\label{eq:prod-conv}
	\tilde{F}(z):= \prod_{n\in I_1} \frac{\nu_n - z}{\lambda_n - z}.
\end{equation}
To show that $\tilde F$ is well defined, we take an arbitrary $\varepsilon \in (0,d/2)$ and set 
\begin{equation*}		
	R_n(\varepsilon) := \{ z \in \bC \mid |z-\lambda_n| < \varepsilon\}, \qquad
	R(\varepsilon) := \bC \setminus \bigl(\cup_{n\in I_1} R_n(\varepsilon)\bigr).
\end{equation*}
Then we have the following

\begin{lemma}\label{lem:prod-conv}
For each $\varepsilon \in (0, d/2)$, the product in~\eqref{eq:prod-conv} 
converges uniformly in $R(\varepsilon)$.
\end{lemma}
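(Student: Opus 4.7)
The plan is to rewrite each factor in the form $1 + \alpha_n(z)$ with $\alpha_n(z)$ small and summable, and then appeal to the standard criterion for uniform convergence of infinite products. Concretely, for every $n\in I_1$ I would write
\[
    \frac{\nu_n - z}{\lambda_n - z} = 1 + \frac{\nu_n - \lambda_n}{\lambda_n - z},
\]
so that $\tilde F(z) = \prod_{n\in I_1}\bigl(1+\alpha_n(z)\bigr)$ with $\alpha_n(z) := (\nu_n-\lambda_n)/(\lambda_n-z)$.

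Next I would use the defining property of $R(\varepsilon)$: if $z\in R(\varepsilon)$, then $|z-\lambda_n|\ge \varepsilon$ for every $n\in I_1$. Consequently
\[
    |\alpha_n(z)| \le \frac{|\nu_n-\lambda_n|}{\varepsilon}
\]
uniformly in $z\in R(\varepsilon)$. Since the majorant $M_n := |\nu_n-\lambda_n|/\varepsilon$ is summable by hypothesis~\eqref{eq:nu-converge} and does not depend on $z$, the series $\sum_{n\in I_1} \alpha_n(z)$ converges absolutely and uniformly on $R(\varepsilon)$.

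I would then invoke the standard theorem from complex analysis: if holomorphic functions $f_n$ on a domain satisfy $\sum_n \|f_n - 1\|_\infty < \infty$ (with a fixed summable majorant), then the partial products $\prod_{n} f_n$ converge uniformly on that domain to a holomorphic limit, and the limit is independent of the order of factors. Applying this with $f_n(z) = 1+\alpha_n(z)$ yields uniform convergence of $\tilde F$ on $R(\varepsilon)$; the order-independence also reconciles the principal value convention for the $\mathbb{Z}$-indexed product mentioned in the footnote, since absolute convergence of $\sum\alpha_n$ implies that $\prod_{|n|\le N}(1+\alpha_n(z))$ and the symmetrically partial product agree in the limit.

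The argument is essentially routine; the only point deserving attention is verifying that the bound $|\lambda_n-z|\ge\varepsilon$ really is uniform on $R(\varepsilon)$ for every $n\in I_1$ (including $z$ outside a large ball, where one usually worries about lack of compactness). This follows directly from the definition of $R(\varepsilon)$ as the complement of the union of the $\varepsilon$-disks $R_n(\varepsilon)$, so no extra compactness or $d$-separation assumption is needed in this step—the $d$-separation will only be used implicitly later to guarantee that the disks $R_n(\varepsilon)$ are disjoint.
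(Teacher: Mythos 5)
Your proposal is correct and follows essentially the same route as the paper: both reduce to the uniform bound $|(\nu_n-\lambda_n)/(\lambda_n-z)|\le|\nu_n-\lambda_n|/\varepsilon$ on $R(\varepsilon)$ and conclude by the Weierstrass $M$-test (the paper phrases this via $\sum\log(1+|\cdot|)$, you via the standard product-convergence theorem, which is the same estimate). Your added remark on order-independence reconciling the principal-value convention is a nice touch but does not change the substance.
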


\begin{proof}
It is enough to show that the series
\begin{equation*}\label{eq:prod-conv-1}
	\sum_{n\in I_1} \log \left(1 + \left| \frac {\nu_n - \lambda_n}{\lambda_n - z}\right| \right)
\end{equation*}
converges uniformly on the same set. However, for $z \in R(\varepsilon)$ and $n\in I_1$ we get the estimate
\begin{equation}\label{eq:prod-conv-2}
	\log\left(1 + \left| \frac {\nu_n - \lambda_n}{\lambda_n - z}\right| \right)
		\le \left| \frac {\nu_n - \lambda_n}{\lambda_n - z}\right|
		\le \frac {|\nu_n - \lambda_n|}{\varepsilon},
\end{equation}
which in view of the convergence of the series~\eqref{eq:nu-converge} and the Weierstrass M-test finishes the proof.
\end{proof}

The Weierstrass $M$-test used in the above proof also justifies passage to the limit 
\[
	\lim_{u \to +\infty} \sum_{n\in I_1} \left| \frac {\nu_n - \lambda_n}{\lambda_n - iu}\right| = 0;
\]
as a result, we get 

\begin{corollary}\label{cor:lim-t-F}
There exists the limit
$$
\lim_{u \to + \infty} \tilde{F} (i u) = 1.
$$
\end{corollary}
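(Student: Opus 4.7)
The plan is to reduce the statement about the product to one about a sum by taking logarithms, and then use the preceding convergence estimate together with the elementary inequality $|\log(1+w)|\le 2|w|$ for $|w|\le 1/2$. The key preliminary observation is that on the positive imaginary axis one has the \emph{uniform} bound $|\lambda_n - iu|=\sqrt{\lambda_n^2+u^2}\ge u$, so that for every $n\in I_1$ and every $u>0$
\[
	\left|\frac{\nu_n-\lambda_n}{\lambda_n - iu}\right|
	\le \frac{|\nu_n-\lambda_n|}{u}.
\]
Summing over $n\in I_1$ and using convergence of the series in~\eqref{eq:nu-converge} yields the limit announced just above the corollary, namely $\sum_{n\in I_1}\bigl|(\nu_n-\lambda_n)/(\lambda_n-iu)\bigr|\to 0$ as $u\to+\infty$.

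Next I would write each factor of $\tilde F(iu)$ in the form $1+w_n(u)$ with $w_n(u):=(\nu_n-\lambda_n)/(\lambda_n-iu)$, so that, by Lemma~\ref{lem:prod-conv},
\[
	\log \tilde F(iu) = \sum_{n\in I_1} \log\bigl(1+w_n(u)\bigr)
\]
using the principal branch of the logarithm. Once $u$ is chosen so large that $\sum_{n\in I_1}|w_n(u)|<1/2$ (which is possible by the previous step), each $|w_n(u)|$ is in particular below $1/2$, and the standard estimate $|\log(1+w)|\le 2|w|$ applies term by term. This gives
\[
	\bigl|\log \tilde F(iu)\bigr|
	\le 2\sum_{n\in I_1} |w_n(u)|
	\le \frac{2}{u}\sum_{n\in I_1}|\nu_n-\lambda_n|,
\]
which tends to $0$ as $u\to+\infty$. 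Exponentiating yields $\tilde F(iu)\to 1$.

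I do not foresee any genuine obstacle here; the only subtlety is the choice of the logarithm branch, which is harmless because for $u$ sufficiently large the individual summands $\log(1+w_n(u))$ are well inside the disk of radius $\log 2$ around the origin, and the sum of their absolute values is small, so the principal-branch interpretation of $\log\tilde F(iu)$ agrees with any consistent continuous choice near $+i\infty$. The whole argument is essentially a direct continuation of the proof of Lemma~\ref{lem:prod-conv}, refining the bound~\eqref{eq:prod-conv-2} on the specific curve $z=iu$ where $|\lambda_n-z|$ grows uniformly in $n$.
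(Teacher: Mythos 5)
Your argument is correct and is essentially the paper's own: the paper likewise derives $\sum_{n\in I_1}\bigl|(\nu_n-\lambda_n)/(\lambda_n-iu)\bigr|\to 0$ as $u\to+\infty$ (citing the Weierstrass $M$-test from Lemma~\ref{lem:prod-conv}) and then passes from the vanishing of this sum to the convergence of the product to $1$. You merely make explicit the quantitative bound $|\lambda_n-iu|\ge u$ and the logarithm step that the paper leaves implicit.
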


The function $\tilde F$ is meromorphic in $\bC$, and its residue at the point $\lambda_n \in \Lambda_1$ is
\begin{equation}\label{eq:res-t-F}
	-c_n = \lim\limits_{z \to \lambda_n} (z-\lambda_n) \tilde{F}(z) 
	    = (\lambda_n - \nu_n) \prod\limits_{\substack{m \in I_1 \\ m \neq n}} %_{m \in I_1, m \neq n} 
	    \frac{\nu_m - \lambda_n}{\lambda_m - \lambda_n}.
				% \substack{m \in I_1 \\ m \neq n}    
\end{equation}

\begin{lemma}\label{lem:res-sum}
The series
\begin{equation}\label{eq:res-sum-1}
\sum_ {n \in I_1} |c_n|
\end{equation}
converges.
\end{lemma}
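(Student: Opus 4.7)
The plan is to show that the infinite product appearing in the formula~\eqref{eq:res-t-F} for $c_n$ is uniformly bounded in $n\in I_1$, so that summability of $|c_n|$ follows directly from summability of $|\nu_n - \lambda_n|$.

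First I would rewrite each factor in a form that makes the comparison with the hypothesis~\eqref{eq:nu-converge} transparent. Writing $\nu_m - \lambda_n = (\lambda_m - \lambda_n) + (\nu_m - \lambda_m)$ gives
\[
  \left| \frac{\nu_m - \lambda_n}{\lambda_m - \lambda_n} \right|
   \le 1 + \frac{|\nu_m - \lambda_m|}{|\lambda_m - \lambda_n|}.
\]
Together with the elementary inequality $\log(1+x)\le x$ (already used in the proof of Lemma~\ref{lem:prod-conv}), this yields
\[
  \prod_{\substack{m\in I_1\\ m\neq n}}
    \left|\frac{\nu_m - \lambda_n}{\lambda_m - \lambda_n}\right|
   \le \exp\!\left(\sum_{\substack{m\in I_1\\ m\neq n}}
       \frac{|\nu_m - \lambda_m|}{|\lambda_m - \lambda_n|}\right).
\]

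Next, I would exploit Assumption~(A2) to bound the exponent uniformly in $n$. Since the eigenvalues $\lambda_n$ are arranged in increasing order and are $d$-separated, $|\lambda_m - \lambda_n| \ge d$ whenever $m\neq n$ (in fact $|\lambda_m-\lambda_n|\ge d|m-n|$, but $|\lambda_m-\lambda_n|\ge d$ already suffices). Denoting $S := \sum_{m\in I_1}|\nu_m - \lambda_m|$, which is finite by hypothesis, this gives the uniform estimate
\[
  \sum_{\substack{m\in I_1\\ m\neq n}}
       \frac{|\nu_m - \lambda_m|}{|\lambda_m - \lambda_n|}
   \le \frac{S}{d}
\]
for every $n\in I_1$. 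Combined with the previous inequality and the factor $|\lambda_n - \nu_n|$ in front, this yields $|c_n| \le e^{S/d}\, |\nu_n - \lambda_n|$ for all $n\in I_1$.

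Summing over $n\in I_1$ and using the convergence of~\eqref{eq:nu-converge} then gives $\sum_{n\in I_1}|c_n| \le e^{S/d}\, S < \infty$, which completes the proof. There is no real obstacle here: once the telescoping identity $\nu_m - \lambda_n = (\lambda_m - \lambda_n) + (\nu_m - \lambda_m)$ is combined with the $d$-separation condition, the key simplification is that the finitely or infinitely many ``dangerous'' denominators $|\lambda_m - \lambda_n|$ with small $|m-n|$ are still bounded below by $d$, and so the usual obstruction to bounding products of this type simply does not occur.
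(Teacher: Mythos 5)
Your proof is correct and follows essentially the same route as the paper: both reduce the claim to a uniform-in-$n$ bound on the product $\prod_{m\neq n}\bigl|\tfrac{\nu_m-\lambda_n}{\lambda_m-\lambda_n}\bigr|$ via the estimate $\log\bigl|\tfrac{\nu_m-\lambda_n}{\lambda_m-\lambda_n}\bigr|\le\bigl|\tfrac{\nu_m-\lambda_m}{\lambda_m-\lambda_n}\bigr|\le\tfrac1d|\nu_m-\lambda_m|$ and the convergence of~\eqref{eq:nu-converge}. The only difference is presentational: you make the final bound $|c_n|\le e^{S/d}|\nu_n-\lambda_n|$ explicit, which the paper leaves implicit.
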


\begin {proof}
In view of~\eqref{eq:res-t-F}, convergence of series \eqref{eq:res-sum-1} follows from convergence of the series
\begin{equation*}\label{eq:res-sum-2}
	\sum_ {n \in I_1} | \lambda_n - \nu_n | 
			\prod_ {\substack{m \in I_1 \\ m \neq n}} 
		\left | \frac {\nu_m - \lambda_n} {\lambda_m - \lambda_n} \right |, 
\end{equation*}
and to establish the latter it is enough to show that the sequence
\begin{equation}\label{eq:res-sum-3}
\prod_{\substack{m \in I_1 \\ m \neq n}} \left | \frac {\nu_m - \lambda_n} {\lambda_m - \lambda_n} \right |
\end{equation}
is bounded in $n\in I_1$.

Applying the same reasoning as in the proof of Lemma~\ref{lem:prod-conv}, we conclude that the sum of the series
\begin{align*}
	\sumI_{m \neq n} \log \left | \frac {\nu_m - \lambda_n} {\lambda_m - \lambda_n} \right | 
		&\le \sumI_{m \neq n} \log 
			\left (1 + \left | \frac {\nu_m - \lambda _m} {\lambda_m - \lambda_n} \right | \right)\\ 
		&\le \sumI_{m \neq n} \left | \frac {\nu_m - \lambda _m} {\lambda_m - \lambda_n} \right |
			\le \frac1d \sum_ {m \in I_1} |\nu_m - \lambda _m|
\end{align*}
has an $n$-independent bound, which implies that the sequence~\eqref{eq:res-sum-3} is uniformly bounded. The proof is complete.
\end{proof}

In view of the above lemma, the series 
\[
	\sum_ {n \in I_1} \frac {c_n} {\lambda_n- z}
\]
converges uniformly in $R(\varepsilon)$ for every $\varepsilon \in (0,d/2)$. It follows that the function
\[
	F(z) :=  1 + \sum_ {n \in I_1} \frac {c_n} {\lambda_n- z}
\]
is well defined and analytic in the set $\bC \setminus \Lambda_1$ and has simple  poles at the points $z \in \Lambda_1$.  
The Lebesgue dominated convergence theorem also implies that 
$$
	\lim_{u \to +\infty} F(i u) = 1.
$$

\begin{lemma}\label{lem:F=tildeF}
	The function $F-\tilde F$ is equal to zero identically in~$\bC$.
\end{lemma}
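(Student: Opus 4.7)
The plan is to show that $F-\tilde F$ is entire and bounded on $\mathbb{C}$, and then invoke Liouville together with the known behaviour at infinity.

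First I would verify that $F-\tilde F$ has no poles. Both $F$ and $\tilde F$ are meromorphic on $\mathbb{C}$ with, at worst, simple poles at the points of $\Lambda_1$. From the definition of $F$, its residue at $\lambda_n$ equals $-c_n$, while by~\eqref{eq:res-t-F} the same is true for $\tilde F$. Consequently, the principal parts at each $\lambda_n\in\Lambda_1$ cancel and $F-\tilde F$ extends analytically to all of $\mathbb{C}$.

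Next I would establish a uniform bound for $F-\tilde F$ on the set $R(\varepsilon)$ for some fixed $\varepsilon\in(0,d/2)$. For the sum, $|F(z)-1|\le\varepsilon^{-1}\sum_{n\in I_1}|c_n|$ on $R(\varepsilon)$ by Lemma~\ref{lem:res-sum}. For the product, repeating the estimate \eqref{eq:prod-conv-2} termwise yields
\[
    \log |\tilde F(z)|\le \sum_{n\in I_1}\log\!\left(1+\Big|\frac{\nu_n-\lambda_n}{\lambda_n-z}\Big|\right)\le \frac{1}{\varepsilon}\sum_{n\in I_1}|\nu_n-\lambda_n|,
\]
so $\tilde F$ is also bounded on $R(\varepsilon)$. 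Hence $F-\tilde F$ is bounded on $R(\varepsilon)$.

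Then I would extend this bound to all of $\mathbb{C}$. Since $F-\tilde F$ is entire and the closed discs $\overline{R_n(\varepsilon)}$ have boundaries lying in $R(\varepsilon)$, the maximum modulus principle applied on each $\overline{R_n(\varepsilon)}$ gives the same bound inside, so $F-\tilde F$ is bounded on the whole plane. By Liouville's theorem it is therefore a constant.

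Finally, to identify the constant I would use Corollary~\ref{cor:lim-t-F} together with the remark after Lemma~\ref{lem:res-sum} (dominated convergence applied to the series defining~$F$): both $F(iu)$ and $\tilde F(iu)$ tend to $1$ as $u\to+\infty$, so $(F-\tilde F)(iu)\to 0$ and the constant must be zero. I do not expect a real obstacle; the only subtlety is the bookkeeping that matches the residues and justifies the two limits at infinity on the imaginary axis, which is already supplied by \eqref{eq:res-t-F} and Corollary~\ref{cor:lim-t-F}.
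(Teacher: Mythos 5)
Your proof is correct and follows essentially the same route as the paper: residue cancellation makes $F-\tilde F$ entire, a uniform bound plus Liouville makes it constant, and the limits along the positive imaginary axis identify the constant as zero. The only (cosmetic) difference is that you propagate the bound from $R(\varepsilon)$ into the small discs $\overline{R_n(\varepsilon)}$ via the maximum modulus principle, whereas the paper bounds $F$ and $\tilde F$ on the boundaries of expanding rectangles $Q_k$ and fills those in; both devices work equally well here.
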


\begin{proof}
We set $G := F-\tilde F$; then the function $G$ is meromorphic in $\bC$ with possible single poles at the points $\Lambda_1$. However, as the residua of $F$ and $\tilde F$ at each point $z\in \Lambda_1$ coincide by construction, we conclude that the function~$G$ has removable singularities at the points $z\in \Lambda_1$ and thus is entire. We next show that $G$ is uniformly bounded over $\bC$ and thus is constant by the Liouville theorem; as  
\[
	\lim_{u\to+\infty} G(iu) = \lim_{u\to+\infty} F(iu) - \lim_{u\to+\infty} \tilde F(iu) =0,
\]
this constant is zero, and thus the proof will be complete.
	
For large enough $k$, we denote by $Q_k$ the rectangular bounded by the lines $\myIm z = \pm \lambda_k$, $\myRe z = \lambda_{-|k|} - d/2$, and $\myRe z = \lambda_k + d/2$ (if $I=\mathbb{N}$, then we replace $\lambda_{-k}$ with $-\lambda_k$). 
Observe that for every $n \in I_1$ and $z \in \partial Q_k$ we have 
\(|z-\lambda_n|\ge d/2\); 
as a result, we conclude that 
\[
	\sup_{z\in\partial Q_k}|F(z)| \le 1 + \frac{2}{d} \sum_{n\in I_1} |c_n| := C.
\]
Next, we note that for $\eps\in(0,d/2)$ the boundary~$\partial Q_k$ of $Q_k$ lies in the set~$R(\eps)$. As in the proof of Lemma~\ref{lem:prod-conv}, we can derive the bound (cf.~\eqref{eq:prod-conv-2})
\[
	\sup_{z\in\partial Q_k}|\tilde F(z)| \le \exp\Bigl\{ \frac1\eps \sum_{n \in I_1} |\nu_n - \lambda_n|\Bigr\} := \tilde C.
\]
Since the function~$G$ is entire, it follows from the maximum modulus principle that 
\[
	|G(z)| \le C + \tilde C
\]
inside every set~$Q_n$ and thus for all $z\in \bC$. Therefore, the function $G$ is bounded; as explained at the beginning of the proof, this implies the required results. 
\end{proof}

\begin{proof}[Proof of Theorem~\ref{thm:inv}]
Given any sequence $\bm{\nu}$ of complex numbers satisfying the assumption of the theorem, we construct the meromorphic function~$\tilde{F}$ via~\eqref{eq:prod-conv}. Next, calculate the residua $-c_n$ of~$\tilde F$ at the points $\lambda_n \in \Lambda_1$ via~\eqref{eq:res-t-F} and define the sequences 
\begin{equation}\label{eq:anbn}
	a_n:= \sqrt{|c_n|}, \qquad b_n:= \sqrt{|c_n|} e^{i\arg{c_n}}, \qquad n\in I_1,
\end{equation}
and 
\[
	a_n:= 1/(1+|n|) ,\qquad b_n = 0, \qquad n\in I_0.
\]

Since the sequence $(c_n)_{n\in I_1}$ is summable by Lemma~\ref{lem:res-sum}, it follows that the sequences $(a_n)_{n\in I}$ and $(b_n)_{n\in I}$ belong to $\ell_2(I)$. Therefore, there exist functions $\varphi$ and $\psi$ in the Hilbert space $\mathcal{H}$ whose Fourier coefficients in the basis $v_n$ are equal to $a_n$ and $b_n$, respectively. 

We now consider the operator~$B$ of the form~\eqref{eq:B} with the functions $\varphi$ and $\psi$ just introduced and conclude by virtue of Lemma~\ref{lem:F=tildeF} that the corresponding meromorphic function~$F$ of~\eqref{eq:F-new} coincides with~$\tilde F$. Therefore, zeros of $F$ are precisely the elements of the subsequence~$\bm{\nu}_1:=(\nu_n)_{n\in I_1}$, both counting multiplicity; namely, if a number $\nu$ occurs $k$ times  in~$\bm{\nu}_1$, it is a zero of $F$ of multiplicity~$k$. The analysis of the paper~\cite{DobHry20} summarized in Section~\ref{sec:general} shows that each element $\nu$ of $\bm{\nu}$ is an eigenvalue of~$B$ and its multiplicity is equal to the number of times $\nu$ is repeated in the sequence~$\bm{\nu}$. The proof is complete.
\end{proof}

%\begin{remark}
The above proof also gives an algorithm of constructing an operator~$B$ for any sequence~$\bm{\nu}$ of complex numbers satisfying~\eqref{eq:nu-converge}. Namely, given such a sequence~$\bm{\nu}$, we 
\begin{enumerate}
	\item construct the product~$\tilde F$ of~\eqref{eq:prod-conv};
	\item then calculate the residua~$-c_n$ of $\tilde F$ at the points~$\lambda_n$;
	\item construct the Fourier coefficients~$a_n$ and $b_n$ of $\varphi$ and $\psi$ via~\eqref{eq:anbn}.
\end{enumerate}
As was noted at the beginning of this section, there are infinitely many such operators; all of them are fixed by the condition~$\overline{a}_nb_n = c_n$ on the Fourier coefficient $a_n$ and $b_n$ of the functions $\varphi$ and $\psi$.
%\end{remark}

%%%%%%%%%%%%%%%%%%%%%%%%%%%%%%%%%%%%%%%%%%%%

\section{Examples and discussions}\label{sec:example}

%%%%%%%%%%%%%%%%%%%%%%%%%%%%%%%%%%%%%%%%%%%%

We give here two examples illustrating that the results of the paper are in a sense optimal. For simplicity, we take the unperturbed operator~$A$ to be defined in the Hilbert space~$L_2(0,2\pi)$ via
	\[
	A = \frac1{i}\frac{d}{dx}
	\]
	subject to the periodic boundary condition $y(0)=y(2\pi)$. The spectrum of $A$ coincides with the set $\mathbb{Z}$, and a normalized eigenfunction $v_n$ corresponding to the eigenvalue~$\lambda_n:=n$ is equal to $e^{inx}/\sqrt{2\pi}$. Therefore, the characteristic function of a generic rank-one perturbation~$B$ of~\eqref{eq:B} has the form 
	\[
		F(z) = \sum_{n\in\mathbb{Z}} \frac{c_n}{n-z} +1,
	\]
	where $c_n : = \overline{a}_nb_n$ is determined via the Fourier coefficients $a_n$ and $b_n$ of the functions $\varphi$ and $\psi$.
	
\begin{example}\rm
	Our first example shows that convergence of the series~\eqref{eq:mu-converge} is not guaranteed if the functions~$\varphi$ and $\psi$ do not belong to $L_2(0,2\pi)$. Namely, we take $a_n = a_{-n} = n^{-1/2}$ and $b_n = -b_{-n} = n^{-1/2}$ for $n\in\mathbb{N}$ and $a_0 = b_0 = 0$; thus $c_n = n^{-1}$ for $n\ne0$. To study the asymptotics of the corresponding eigenvalues $\mu_n$ of the operator~$B$, we recall the equality~\cite[Ch.~5.2]{Ahl:78}
	\[
		\sum_ {\substack{n \in \mathbb{Z}\\ n \neq 0}} \frac{1}{n(n-z)} = \frac1{z^2} -\frac\pi{z} \cot\pi z; 
	\]
	thus 
	\[
		F(z) = \sum_{\substack{n \in \mathbb{Z}\\ n \neq 0}} \frac{1}{n(n-z)} + 1 = \frac{z^2 +1}{z^2} -\frac\pi{z} \cot\pi z.
	\]
	It follows that $\mu_n$ are zeros of the equation
	\[
		\tan \pi z = \frac{\pi z}{z^2 + 1}
	\]
	and thus $\mu_n = \lambda_n + \varepsilon_n$ with $\varepsilon_n \to 0$ as $|n| \to \infty$; the relation
	\[
		\frac{\mu_n}{\varepsilon_n (\mu_n^2 +1)} = \frac{\tan \pi \varepsilon_n}{\pi \varepsilon_n} \to 1
	\]
	as $|n|\to\infty$ now implies that $\varepsilon_n \mu_n \to 1$, and thus $\varepsilon_n = n^{-1}(1 + o(1))$ as $|n|\to\infty$. As a result, the series~\eqref{eq:mu-converge} diverges. 
\end{example}

\begin{example}\label{ex:sin}\rm 
	Consider the rank-one perturbation $B$ of $A$ as in~\eqref{eq:B} with $\varphi$ and $\psi$ given by their Fourier coefficients $a_0 = b_0 = 0$ and 
	$a_n = a_{-n} = n^{-\beta}$ and $b_n = b_{-n} = n^{-\beta}$ for $n\in\mathbb{N}$, with $\beta>1$.
	We observe that the functions $\varphi$ and $\psi$ can be found explicitly via the fractional derivatives, cf. \cite{Tseng:2000}. 	
	The corresponding characteristic function~$F$ is equal to 
	\[
		F(z) = 1 + \sum_{n\ne0}\frac{|n|^{-\beta}}{n-z}, % = 1 - 2z \sum_{n=1}^\infty \frac{|n|^{-2\beta}}{z^2 - n^2}.
	\]
	and can be also represented as a product
	\[
		F(z) = \prod_{\substack{n \in \mathbb{Z} \\ n \neq 0}}\frac{\mu_n - z}{n - z}.
	\]
	The proof of Theorem~\ref{thm:EV-asympt} (see~\eqref{eq:19}) shows that $\mu_n - n = O(|n|^{-\beta})$ as $|n| \to \infty$. 
	The residue of~$F$ at the point $z=n$ is equal to $-|n|^{-\beta}$; on the other hand, it can be calculated as (cf.~\eqref{eq:res-t-F})
	\[
		\res_{z=n} F(z) = (n- \mu_n) \prod\limits_{\substack{m \in \mathbb{Z} \\ m \neq n}} %_{m \in I_1, m \neq n} 
		\frac{\mu_m - n}{m - n}.
	\]
	The infinite products in the above formula have been shown in the proof of Lemma~\ref{lem:res-sum} to be uniformly bounded in $n$ (cf.\ the reasoning following formula \eqref{eq:res-sum-3}). Therefore, we conclude that 
	\[
		|n|^{-\beta} \le C |\mu_n - n|,
	\]
	for a constant $C$ independent of~$n$, so that 
	\[
		|\mu_n - n| \asymp |n|^{-\beta}. 
	\]
\end{example}

\begin{remark}
	The same arguments lead to conclusion that, for a generic rank-one perturbation, $|\mu_n - \lambda_n| \asymp |a_nb_n|$ as $|n|\to \infty$. This allows us to control the decay of the offsets $|\mu_n - \lambda_n|$ through the products $|a_nb_n|$ of the Fourier coefficients of $\varphi$ and $\psi$ and vice versa. 
\end{remark}	

Fix an arbitrary function~$\varphi \in H$ and let $a_n$ be its Fourier coefficients in the orthonormal basis $(v_n)_{n\in I}$ of the eigenfunctions~$v_n$ of $A$. Denote by $\ell_1(\varphi)$ the subspace of $\ell_1(I)$ consisting of all sequences $\mathbf{c} = (c_n)_{n\in I}$ of the form $c_n=x_na_n$ with $(x_n)_{n\in I} \in \ell_2(I)$. The above analysis lead to the following uniqueness result:

\begin{corollary}\label{cor:uniq}
Given $\varphi \in H$, for every sequence $\bm{\varepsilon} = (\varepsilon_n)_{n\in I}\in \ell_1(\varphi)$ there exists a function~$\psi \in H$ such that the rank-one perturbation $B$ of the operator~$A$ given by~\eqref{eq:B} has eigenvalues $\mu_n := \lambda_n + \varepsilon_n$, $n\in I$, counting with multiplicities. 

Such $\psi$ is unique if and only if none of $a_n$ vanishes; each $n\in I$ such that $a_n = 0$ leaves the corresponding Fourier coefficient~$b_n$ undetermined and thus increases by one the degree of freedom of the set of all such~$\psi$.

The roles of $\varphi$ and $\psi$ can be interchanged. 
\end{corollary}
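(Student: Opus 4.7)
The plan is to invoke Theorem~\ref{thm:inv} while keeping the prescribed~$\varphi$ fixed. Given $\bm{\varepsilon}\in\ell_1(\varphi)$, write $\varepsilon_n = x_n a_n$ with $(x_n)\in\ell_2(I)$ and set $\nu_n := \lambda_n + \varepsilon_n$; since $(\varepsilon_n)\in\ell_1(I)$ by definition of $\ell_1(\varphi)$, the hypothesis~\eqref{eq:nu-converge} of Theorem~\ref{thm:inv} is fulfilled. Form the meromorphic function $\tilde F$ of~\eqref{eq:prod-conv} and compute its residues $-c_n$ at the points $\lambda_n$ via~\eqref{eq:res-t-F}. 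By the analysis of Section~\ref{sec:general} summarised in the proof of Theorem~\ref{thm:inv}, the spectrum of any operator $B$ of the form~\eqref{eq:B} whose Fourier coefficients satisfy $\overline{a}_n b_n=c_n$ coincides with $(\lambda_n+\varepsilon_n)_{n\in I}$ counting multiplicities; the problem therefore reduces to constructing a $\psi\in H$ whose Fourier coefficients $(b_n)$ solve $\overline{a}_n b_n=c_n$ for the given $(a_n)$.

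The key estimate I would establish is the pointwise bound $|c_n|\le M|\varepsilon_n|$ with an $n$-independent constant $M$. This follows by factoring $\nu_n - \lambda_n = \varepsilon_n$ out of~\eqref{eq:res-t-F} and majorising the remaining product $\prod_{m\ne n}\bigl|(\nu_m-\lambda_n)/(\lambda_m-\lambda_n)\bigr|$ by $\exp\bigl(d^{-1}\sum_{m\in I_1}|\nu_m-\lambda_m|\bigr)$ exactly as in the proof of Lemma~\ref{lem:res-sum}, using $|\lambda_m-\lambda_n|\ge d$ from~(A2). With this in hand, set $b_n := c_n/\overline{a}_n$ at every index with $a_n\ne0$; then $|b_n|\le M|\varepsilon_n|/|a_n| = M|x_n|$, so this portion of $(b_n)$ is square summable. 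For indices $n$ with $a_n=0$ one has $\varepsilon_n=x_n a_n=0$, hence $\nu_n=\lambda_n$ and $c_n=0$ by~\eqref{eq:res-t-F}, so the required relation $\overline{a}_n b_n=c_n$ is satisfied by any choice of $b_n$; any square summable extension (for instance $b_n=0$) completes $(b_n)\in\ell_2(I)$ and produces the desired $\psi\in H$. Lemma~\ref{lem:F=tildeF} then identifies the characteristic function of the resulting $B$ with $\tilde F$, and the spectral claim follows.

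For uniqueness, every index $n$ with $a_n\ne0$ fixes $b_n = c_n/\overline{a}_n$ rigidly, while every index $n$ with $a_n=0$ leaves $b_n$ as a free complex parameter (subject only to the global $\ell_2$ constraint), which yields the claimed additional degree of freedom per vanishing $a_n$. The symmetric version with the roles of $\varphi$ and $\psi$ interchanged is proved by the same argument, now solving $\overline{a}_n b_n=c_n$ for $a_n$ in terms of a prescribed $(b_n)$. The principal technical obstacle is precisely the uniform estimate $|c_n|\le M|\varepsilon_n|$; without the separation hypothesis~(A2) this bound could fail, and with it in place the $\ell_1(\varphi)$ membership of $\bm{\varepsilon}$ converts directly into the required square summability of $(b_n)$.
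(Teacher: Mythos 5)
Your proposal is correct and takes essentially the same route as the paper: the corollary is presented there as a direct consequence of the reconstruction algorithm of Theorem~\ref{thm:inv} combined with the uniform bound on the residue products established in Lemma~\ref{lem:res-sum} (equivalently, the relation $|\mu_n-\lambda_n|\asymp |a_nb_n|$ noted in the preceding Remark), which is exactly your key estimate $|c_n|\le M|\varepsilon_n|$ turning $b_n=c_n/\overline{a}_n$ into an $\ell_2$ sequence via $|b_n|\le M|x_n|$. Your treatment of the indices with $a_n=0$ (forcing $\varepsilon_n=0$, $c_n=0$, and leaving $b_n$ free) and of uniqueness via the determinacy of the products $\overline{a}_nb_n$ matches the paper's intended argument.
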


\medskip 

We conclude the paper with some comments on the results obtained. 
Most of the analysis of~\cite{DobHry20} and of this paper has straightforward generalization to the case of a normal operator~$A$. The most crucial properties and facts used are 
\begin{itemize}
	\item[(a)] the spectrum of $A$ is simple and separated;
	\item[(b)] the eigenvectors form an orthonormal basis (or even a Riesz basis) of~$H$; 
	\item[(c)] the spectral theorem allowing to represent the characteristic function~$F$ of a rank-one perturbation~$B$ in the form~\eqref{eq:F-new}.
\end{itemize}
Some care should be given to properly choose the regions~$Q_k$ in Section~\ref{sec:asymptotics} and \ref{sec:inverse}, but otherwise the arguments remain valid and establish Theorems~\ref{thm:EV-asympt} and \ref{thm:inv}, i.e., justify the possibility to enumerate the spectrum of~$B$ so that series~\eqref{eq:mu-converge} converges and, for every sequence $(\nu_n)_{n\in I}$ satisfying~\eqref{eq:nu-converge}, to construct a rank-one perturbation $B$ of~$A$ whose spectrum is given by that sequence counting with multiplicities.

In the special case of a self-adjoint rank-one perturbation~\eqref{eq:B} with $\psi = \alpha \varphi$ and $\alpha \in \mathbb{R}$, the resulting spectrum of $B$ is simple outside~$\sigma_0(A)$, of geometric multiplicity at most~$2$ at the points of~$\sigma_0(A)$, and the eigenvalues $\sigma_1(A)$ and $\sigma_1(B)$ strictly interlace, i.e., between every two consecutive eigenvalues from $\sigma_1(A)$ there is a unique eigenvalue from $\sigma_1(B)$ and, vice versa, between every two consecutive eigenvalues from $\sigma_1(B)$ there is a unique eigenvalue from $\sigma_1(A)$. This interlacing property follows from the minmax principle~\cite{ReeSim}; moreover, for $\alpha>0$ we have $\lambda_n < \mu_n$ for all $n\in I_1$; the signs are reversed if~$\alpha<0$. Given $(\mu_n)_{n\in I}$ satisfying the interlacing property, convergence of the series~\eqref{eq:mu-converge} is a necessary and sufficient condition on the spectrum of a self-adjoint rank-one perturbation~$B$ of $A$.


\begin{thebibliography}{99}
	
\bibitem{Ahl:78}
	L.~V.~Ahlfors, 
	\emph{Complex Analysis. An Introduction to the Theory of Analytic Functions of One Complex Variable}.
	$3^{\mathrm{rd}}$ ed. 
%	International Series in Pure and Applied Mathematics.
	McGraw-Hill Book Co., New York,  1978. 
		
\bibitem{AGHH}
	S.~Albeverio, F.~Gesztesy, R.~H\o egh-Krohn, and H.~Holden,
	\emph{Solvable Models in Quantum Mechanics},
	Second edition. With an appendix by Pavel Exner. AMS Chelsea Publishing, Providence, RI, 2005. 	



\bibitem{AlbKonKos05}
	S.~Albeverio, A.~Konstantinov, and V.~Koshmanenko, 
	Decompositions of singular continuous spectra of $\mathscr{H}_{-2}$-class rank one perturbations, 
	\emph{Integral Equat. Operator Th.} \textbf{52} (2005), no.~4, 455--464. 	

\bibitem{AlbKosKurNiz03} 
	S.~Albeverio, V.~Koshmanenko, P.~Kurasov, and L.~Nizhnik, 
	On approximations of rank one $\mathscr{H}_{-2}$-perturbations, 
	\emph{Proc. Amer. Math. Soc.} \textbf{131} (2003), no.~5, 1443--1452. 

\bibitem{AlbKos99} 
	S.~Albeverio and V.~Koshmanenko,
	Singular  rank  one  perturbations  of  self-adjoint operators  and  Krein  theory  of  self-adjoint  extensions,
	\emph{Potential  Anal.} \textbf{11} (1999), 279--287.		

\bibitem{AlbKur97} 
	S. Albeverio and P. Kurasov,
	Rank one perturbations, approximations and selfadjoint extensions, 
	\emph{J. Func. Anal.}, \textbf{148} (1997), 152--169.
	
\bibitem{AlbKur97a} 
	S. Albeverio and P. Kurasov,	
	Rank one perturbations of not semibounded operators, 
	\emph{Integr. Equat. Operator Th.}, \textbf{27} (1997), 379--400. 

\bibitem{AlbKur99} 
	S. Albeverio and P. Kurasov,	
	Finite rank perturbations and distribution theory, 
	Proc. Amer. Math. Soc., 127 (1999), 1151--1161.


\bibitem{AlbKur00} 
	S. Albeverio and P. Kurasov,
	\emph{Singular Perturbations of Differential Operators: Schr\"odinger-type Operators},
	Cambridge University Press, 2000.

\bibitem{AlbKuzNiz08}
	S.~Albeverio, S.~Kuzhel, and L.~Nizhnik,
	On the perturbation theory of self-adjoint operators,
	\emph{Tokyo J. Math.} \textbf{31} (2008), no.~2, 273--292.
	

\bibitem{BarRoc20}
	I.~Baraga\~{n}a and A.~Roca, 
	Fixed rank perturbations of regular matrix pencils,
	\emph{Linear Algebra Appl.} \textbf{589} (2020), 201--221.


\bibitem{BehLebPerMoeTru15}
	J.~Behrndt, L. Leben, F. M. Peria, R. M\"ows, and C. Trunk, 
	The effect of finite rank perturbations on Jordan chains of linear operators, 
	\emph{Linear Algebra Appl.} \textbf{479} (2015), 118--130.

\bibitem{BehLebPerMoeTru16}
	J.~Behrndt, L.~Leben, F. M. Peria, R. M\"ows, and C. Trunk, 
	Sharp eigenvalue estimates for rank one perturbations of nonnegative operators in Krein spaces,
	\emph{J. Math. Anal. Appl.} \textbf{439} (2016), no.~2, 864--895.	

\bibitem{BehMoeTru14}
	J.~Behrndt, R.~M\"ows, and C.~Trunk, 
	On finite rank perturbations of selfadjoint operators in Krein spaces and eigenvalues in spectral gaps, 
	\emph{Complex Anal. Oper. Theory} \textbf{8} (2014), no.~4, 925--936.


\bibitem{DobHry20}	
	O.~Dobosevych and R.~Hryniv,
	Spectra of rank-one perturbations of self-adjoint operators, 
	\emph{Preprint}, 2020, arXiv:2006.12241 [math.SP].

\bibitem{DudVdo16}
	M.~Dudkin and T.~Vdovenko, 
	On nonsymmetric rank one singular perturbations of selfadjoint operators,
	\emph{Methods Funct. Anal. Topology} \textbf{22} (2016), no.~2, 137--151.


\bibitem{Far16}
	P.~E.~Farrell, 
	The number of distinct eigenvalues of a matrix after perturbation,
	\emph{SIAM J. Matrix Anal. Appl.} \textbf{37} (2016), no.~2, 572--576.



\bibitem{Gol18}
	Yu.~Golovaty, 
	Schr\"odinger operators with singular rank-two perturbations and point interactions,
	\emph{Integral Equat. Operator Th.} \textbf{90} (2018), no.~5, id.~57, 24~pp. 

\bibitem{GerTru17}
	H.~Gernandt and C.~Trunk, 
	Eigenvalue placement for regular matrix pencils with rank one perturbations,
	\emph{SIAM J. Matrix Anal. Appl.} \textbf{38} (2017), no.~1, 134--154.
	
\bibitem{HomHry20}
	M.~Homa and R.~Hryniv,
	Spectra of $\mathcal{PT}$-symmetric operators under rank-one perturbations,
	\emph{Preprint}, 2020 (accepted in \emph{J. Phys. A}).	

\bibitem{HorMel94}
	L. H\"ormander, A. Melin, 
	A remark on perturbations of compact operators, 	
	\emph{Math. Scand.} \textbf{75} (1994), 255--262.	


\bibitem{Kat95}
	T.~Kato, 
	\emph{Perturbation Theory for Linear Operators}, 
	Springer-Verlag, Berlin,  1995. (Reprint of the 1980 edition. Classics in Mathematics)

\bibitem{Kru92}
	M. Krupnik, 
	Changing the spectrum of an operator by perturbation, 
	\emph{Linear Algebra Appl.} \textbf{167} (1992), 113--118.

\bibitem{Kur04}
	P.~Kurasov,
	Singular and supersingular perturbations: Hilbert space methods, in 
	\textit{Spectral theory of Schrödinger operators}, 185--216, 
	\emph{Contemp. Math.}, \textbf{340}, Amer. Math. Soc., Providence, RI, 2004. 	

\bibitem{KurLugNeu19}	
	P.~Kurasov, A.~Luger, and Ch.~Neuner, 
	On supersingular perturbations of non-semibounded self-adjoint operators, 
	\emph{J. Operator Theory} \textbf{81} (2019), no.~1, 195--223.

\bibitem{KuzNiz06}
	S.~Kuzhel and L.~Nizhnik,
	Finite rank self-adjoint perturbations, 
	\emph{Methods Funct. Anal. Topology} \textbf{12} (2006), no.~3, 243--253.


\bibitem{MehMehRanRod11}
	C.~Mehl, V.~Mehrmann,  A.C.M.~Ran, L.~Rodman, 
	Eigenvalue perturbation theory of classes of structured matrices under generic structured rank one perturbations,
	\emph{Linear Algebra Appl.} \textbf{435} (2011), 687--716.

\bibitem{MehMehRanRod12}
	C.~Mehl, V.~Mehrmann,  A.C.M.~Ran, L.~Rodman, 
	Perturbation theory of selfadjoint matrices and sign characteristics under generic structured rank one perturbations,
	\emph{Linear Algebra Appl.} \textbf{436} (2012) 4027--4042.

\bibitem{MehMehRanRod13}
	C.~Mehl, V.~Mehrmann,  A.C.M.~Ran, L.~Rodman, 
	Jordan forms of real and complex matrices under rank one perturbations, 
	\emph{Oper. Matrices} \textbf{7} (2013), no.~2, 381--398.

\bibitem{MehMehRanRod14}
	C.~Mehl, V.~Mehrmann,  A.C.M.~Ran, L.~Rodman, 
	Eigenvalue perturbation theory under generic rank one perturbations: Symplectic, orthogonal, and unitary matrices,
	\emph{BIT}, \textbf{54} 2014, 219--255.

\bibitem{MehMehRanRod16}
	C.~Mehl, V.~Mehrmann,  A.C.M.~Ran, L.~Rodman, 	
	Eigenvalue perturbation theory of structured real matrices and their sign characteristics under generic structured rank-one perturbations,
	\emph{Linear Multilinear Algebra} \textbf{64} (2016), no.~3, 527--556.

\bibitem{MehMehWoj17}
	C.~Mehl, V.~Mehrmann, M.~Wojtylak,
	Parameter-dependent rank-one perturbations of singular Hermitian or symmetric pencils,
	\emph{SIAM J. Matrix Anal. Appl.} \textbf{38} (2017), no.~1, 72--95.



\bibitem{MorDop03}	
	J.~Moro and F.~Dopico,
	Low rank perturbation of Jordan structure,
	\emph{SIAM J. Matrix Anal. Appl.}, \textbf{25} (2003), 495--506.

\bibitem{ReeSim} M.~Reed and B.~Simon, 
	\emph{Methods of Modern Mathematical Physics IV: Analysis of Operators}, 
	Academic Press, 1978.

\bibitem{Sav03}
	S.~V.~Savchenko, 
	Typical changes in spectral properties under perturbations by a rank-one operator, 
	\emph{Mat. Zametki} \textbf{74} (2003), 590--602 (in Russian); Engl. translat. in  
	\emph{Math. Notes} \textbf{74} (2003), 557--568.

\bibitem{Sav04}
	S.~V.~Savchenko, 
	On the change in the spectral properties of a matrix under a perturbation of a sufficiently low rank, 
	\emph{Funktsional. Anal. Prilozhen.} \textbf{38} (2004), 85--88 (in Russian); Engl. translat. in  
	\emph{Funct. Anal. Appl.} \textbf{38} (2004) 69--71.	


\bibitem{Sim95}
	B.~Simon, 
	Spectral analysis of rank one perturbations and applications, 
	\emph{CRM Proc. Lecture Notes}, \textbf{8} (1995), 109--149.


\bibitem{SosMorMeh20}	
	F.~Sosa, J.~Moro, and C.~Mehl,
	First order structure-preserving perturbation theory for eigenvalues of symplectic matrices,
	\emph{SIAM J. Matrix Anal. Appl.} \textbf{41} (2020), no.~2, 657--690.
	
\bibitem{Tseng:2000}	
	C.-C.~Tseng, S.-C.~Pei, S.-C.~Hsia,
	Computation of fractional derivatives using Fourier transform and digital FIR differentiator,
	\emph{Signal Processing} \textbf{80} (2000), no.~1, 151--159.
	
\end{thebibliography}
\end{document}